\def\R{\mathbb{R}}
\newcommand{\p}{\phi}
\newcommand{\po}{{\phi^\circ}}
\newcommand{\pso}{{\psi^\circ}}
\newcommand{\Om}{\Omega}
\renewcommand{\H}{\mathcal{H}}
\def\dist{\textup{dist}}
\def\Div{\textup{div}\,}
\def\e{\varepsilon}
\theoremstyle{plain}
\newtheorem{theorem}{Theorem}[section]
\newtheorem{proposition}[theorem]{Proposition}
\newtheorem{corollary}[theorem]{Corollary}
\newtheorem{lemma}[theorem]{Lemma}
\theoremstyle{remark}
\newtheorem{remark}[theorem]{Remark}
\theoremstyle{definition}
\newtheorem{definition}[theorem]{Definition}
\newcommand{\comment}[1]{}
\newcommand{\new}[1]{#1}
\begin{document}

\title{Anisotropic and crystalline mean curvature flow of mean-convex sets}

\author{
  {Antonin Chambolle}
                \thanks{CMAP,
        Ecole Polytechnique, CNRS, Institut Polytechnique de Paris,
        Palaiseau, France,
        \newline e-mail: \texttt{antonin.chambolle@polytechnique.fr}} \ ,
        \
        {Matteo Novaga}
        \thanks{Dipartimento di Matematica, Universit\`a di Pisa,
        Largo B. Pontecorvo 5, 56127 Pisa, Italy,
         \newline e-mail: \texttt{matteo.novaga@unipi.it}}
}
\date{}
\maketitle

\begin{abstract}
\noindent We consider a variational scheme for the
anisotropic and crystalline mean curvature flow of sets with strictly positive anisotropic mean curvature. 
We show that such condition is preserved by the scheme, and
we prove the strict convergence in $BV$ of the  time-integrated perimeters of the approximating evolutions,
extending a recent result of De~Philippis and Laux to the anisotropic setting.
We also prove uniqueness of the flat flow obtained in the limit.
\end{abstract}

\noindent\small{\textbf{Keywords}: Anisotropic mean curvature flow, crystal growth, minimizing movements, mean convexity, arrival time, $1$-superharmonic functions.}

\medskip

\noindent\small{\textbf{MSC (2020)}: 53E10, 49Q20, 58E12, 35A15, 74E10.}

\tableofcontents

\section{Introduction}

We are interested in the anisotropic mean curvature flow of sets 
with positive anisotropic mean curvature. More precisely, following \cite{CMP,CMNP19} 
we consider a family of sets $t\mapsto E(t)$ governed by the geometric evolution law 
\begin{equation}\label{eq:MCF} 
V(x,t) = -\psi(\nu_{E(t)})\, \kappa_{E(t)}^{\p}(x),
\end{equation}
where  $V(x,t)$ denotes the normal velocity of the boundary $\partial E(t)$ at $x$, $\p$ is a given
 norm
\new{or, more generally, a possibly non-symmetric convex, one-homogeneous function}
on $\R^d$, $\kappa_{E(t)}^{\p}$ is the {\em anisotropic mean curvature} of $\partial E(t)$ associated with the anisotropy $\p$,  and $\psi$ is another 
\new{convex, one-homogeneous function},
usually called mobility, evaluated at the outer unit normal $\nu_{E(t)}$ to $\partial E(t)$.
\new{Both $\p$ and $\psi$ are real-valued and positive away from $0$.}
We recall that when $\p$ is differentiable in $\R^{d}\setminus\{0\}$, then $\kappa_{E}^{\p}$ is given by
the tangential divergence of the so-called {\em Cahn-Hoffman vector field}~\cite{CahnHoffmann}
\begin{equation}\label{kappaphi}
\kappa_{E}^{\p}=\Div_{{\tau}}\left(\nabla \p(\nu_E)\right) ,
\end{equation}
while in general \eqref{kappaphi} should be replaced with the differential inclusion
\begin{equation*}
\kappa_{E}^{\p} = \Div_{{\tau}}\left(n_E^\p\right), \qquad n_E^\p \in \partial \p(\nu_E).
\end{equation*}
It is well-known that \eqref{eq:MCF} can be interpreted as gradient flow of the anisotropic perimeter
\[
P_\p(E) = \int_{\partial E}\p(\nu_E)d\H^{d-1}\,,
\]
and one can construct global-in-time weak solutions by means of the variational scheme introduced by
Almgren, Taylor and Wang~\cite{ATW93} and, independently, by  Luckhaus and Sturzenhecker~\cite{LS95}.
Such scheme consists in building  a family of tme-discrete evolutions by an iterative minimization {procedure} and in considering 
 any limit  of these  discrete evolutions, as the time step $h>0$ vanishes, as an admissible  solution to the  geometric motion, usually referred to as a {\em flat flow}. The problem which is solved at each step takes the form~\cite[\S 2.6]{ATW93} $E_h^n := T_h E_h^{n-1}$, where $T_h E$ is a solution of 
\begin{equation}\label{eq:defatw}
\min_F P_\p(F) +
 \frac{1}{h}\int_{F} d^{\pso}_E (x) dx,
\end{equation}
where $d^{\pso}_E$ is the signed distance function of $E$, with respect to the anisotropy $\pso$, which is defined as
\begin{equation}\label{eq:defdE}
  d^\pso_E(x) := \inf_{y\in E} \pso(x-y) - \inf_{y\not\in E}\pso(y-x).
\end{equation}
In \cite{ATW93} it is proved that the discrete solution $E_h(t):=E_h^{[\frac th]}$, with $\psi=1$ and $\p$ smooth, converges to a limit flat flow 
which is contained in the zero-level set of the (unique)  viscosity solution of \eqref{eq:MCF}. Such a result has been  
extended in \cite{CMP,CMNP19} to general anisotropies $\psi,\p$. In the isotropic case $\phi=\psi=|\cdot |$ it is shown in \cite{LS95}
that $E_h(t)$ converges to a distributional solution $E(t)$ of \eqref{eq:MCF}, under the assumption that the perimeter is continuous in the limit, that is,
\begin{equation}\label{eq:per}
\lim_{h\to 0}\int_0^T P(E_h(t))\, dt =\int_0^T P(E(t))
\qquad \text{for }T>0.
\end{equation}
Recently, \new{it has been shown in \cite{DPL}} that the continuity of the perimeter holds if the initial set is {\em outward minimizing} for the perimeter (see Section \ref{sec:outer}), a condition which implies the mean convexity and which is preserved by the variational scheme \eqref{eq:defatw}.

In this paper we generalize the result in  \cite{DPL} to the general anisotropic case, where the continuity of the perimeter was previously known only in the convex case \cite{BCCN06}, as a consequence of the convexity preserving property of the scheme. Such result is obtained under a stronger condition of strong outward minimality of the initial set, which is also preserved by the scheme and implies the strict positivity of the anisotropic mean curvature. 
As a corollary, we obtain the continuity of the volume and of the (anisotropic) perimeter of the limit flat flow. 

\smallskip

The plan of the paper is the following: In Section \ref{sec:def} we introduce the notion of outward minimizing set, and we recall the variational scheme proposed 
by Almgren, Taylor and Wang in~\cite{ATW93}. We also show that the scheme preserves the strict outward minimality.
In section \ref{sec:arrival} we show the strict $BV$-convergence of the discrete arrival time functions, we prove the uniqueness of the limit flow,
and we show continuity in time of volume and perimeter, and in Section \ref{sec:ex} we give some examples. 
Eventually, in Appendix \ref{sec:onesh} we recall some results on $1$-superharmonic functions, adapted to the anisotropic setting.

\section*{Acknowledgements}
The authors would like to thank the anonymous referees for their comments
and suggestions. The current version, much improved with respect to
the initial one, owes a lot to their effort.


\section{Preliminary definitions}\label{sec:def}
\subsection{Outward minimizing sets}\label{sec:outer}

\begin{definition}\label{defmean}
Let $\Omega$ be an open subset of $\R^d$ and let
$E\subset\subset \Om$ be a finite perimeter set.
We say that $E$ is  outward minimizing in $\Omega$ if
\begin{equation*}
  P_\p(E) \le P_\p(F) \quad \forall F\supset E, F\subset\subset \Om.\eqno (MC)
\end{equation*}
\end{definition}
\noindent Note that, if $E$, $\p$ are regular, $(MC)$ implies that the $\p$-mean curvature of $\partial E$ is non-negative.

We observe that such a set 
satisfies the following density bound: \new{There exists $\gamma>0$ such that, for all points $x\in E$ satisfying
$|B(x,\rho)\setminus E|>0$ for all $\rho>0$, it holds:
\begin{equation}\label{eq:lwrdensity}
  \frac{|B(x,\rho)\setminus E|}{|B(x,\rho)|} \ge \gamma ,
\end{equation}
whenever $B(x,\rho)\subset \Om$. As a consequence, whenever $x\in E$ is a point of Lebesgue density $1$,
there exists $\rho>0$ small enough such that
$|B(x,\rho)\setminus E|=0$. 
Therefore, identifying the set $E$ with its
points of density $1$, we always assume (unless otherwise explicitly stated)
that $E$ is an open subset of $\R^d$.}

Conversely if $E\subset\R^d$ is bounded and $C^2$, $\p$ is $C^2(\R^d\setminus\{0\})$, and its mean curvature is positive, then one
can find $\Om\supset\supset E$ such that $E$ is outward minimizing in $\Om$. More precisely, if $E$
is of class $C^2$ then, in a neighborhood of $\partial E$, $d^\po_E$ is $C^2$, while in a smaller
neighborhood we even have \new{$\Div\nabla \p(\nabla d^\po_E)\ge \delta$}, for some $\delta>0$. Let $\Om$ be the union
of $E$ and this neighborhood, and set \new{$n_E^\p:=\nabla \p(\nabla d^\po_E)$}: then if $E\subset F\subset\subset \Om$,
\[
  P_\p(F)\ge \int_{\partial^* F}n^\p_E\cdot\nu_F d\H^{d-1}=
  -\int_\Om n^\p_E\cdot D\chi_F
\]
while by construction $P_\p(E)=-\int_\Om n^\p_E\cdot D\chi_E$.
Hence,
\[
  P_\p(F) \ge P_\p(E) - \int_\Om n^\p_E \cdot D(\chi_F-\chi_E)
  = P_\p(E) + \int_{F\setminus E}\Div n^\p_E
  \ge P_\p(E)+\delta |F\setminus E|.
\]

\comment{
In general, if we consider a set $E$ such that
\begin{equation*}
  \lim_{s\to 0} \frac{1}{2s} \{ x\in\R^d:|d_E^\p(x)|\le s \} = P_\p(E), \eqno (Mink)
\end{equation*}
and if there is a neighborhood of $\partial E$ (not necessarily smooth) and
a vector field $n^\p_E\in\partial \p(\nabla d^\po_E)$ with $\Div n^\p_E\ge 0$,
then again, if $\Om$ is the union of $E$ and this neigborhood, $E$ is variational
mean convex in $\Om$. If in addition $\Div n^\p_E\ge \delta>0$ near $\partial E$, we find
moreover that
\begin{equation*}
  P_\p(E) \le P_\p(F) - \delta |F\setminus E| \quad \forall F\supset E, F\subset\subset \Om. 
\end{equation*}
}

Observe (see~\cite[Lemma 2.5]{DPL}) that equivalently, one can express this as:
\begin{equation*}
  P_\p(E\cap F) \le P_\p(F) - \delta |F\setminus E| \quad \forall F\subset\subset \Om.\eqno (MC_\delta)
\end{equation*}
Clearly, condition $(MC_\delta)$ is stronger and reduces to $(MC)$ whenever $\delta=0$.

\begin{remark}[Non-symmetric distances] \label{rem:nsd}
As in the standard case (that is when $\pso$ is smooth and even), the signed ``distance''
  function defined in~\eqref{eq:defdE} is easily seen to satisfy the usual properties of a signed
  distance function. First, it is Lipschitz continuous, hence differentiable almost everywhere.
  Then, if $x$ is a point of
differentiability, $d^\pso_E(x)>0$ and $y\in\partial E$ is such that $\pso(x-y)=d^\pso_E(x)$, then  for $s>0$ small
and $h\in\R^d$ $d^\pso_E(x+sh) \ge \pso(x+sh-y)\ge \pso(x-y)+s z\cdot h$ for any $z\in\partial\pso(x-y)$
and one deduces that $\partial \pso(x-y)=\{\nabla d^\pso_E(x)\}$. If $d^\pso_E(x)<0$, one writes that
$\pso(y-x)=-d^\pso_E(x)$ for some $y\in \partial E$ and uses $\pso(y-x-sh)\ge \pso(y-x) -s z\cdot h$ for
some $z\in\partial\pso(y-x)$, hence $d^\pso_E(x+sh)-d^p_E(x)\le s z\cdot h$ to deduce
now that $\partial \pso(y-x) = \{\nabla d^\pso_E(x)\}$. In all cases, one has \new{$\psi(\nabla d^\pso_E(x))=1$} a.e.~in $\{d^\pso_E\neq 0\}$
(while of course $\nabla d^\pso_E(x)=0$ a.e.~in $\{d^\pso_E=0\}$),
and \new{$\nabla d^\pso_E(x)\cdot (x-y) = d^\pso_E(x)$}, which shows that $y\in x-d^\pso_E(x)\partial \p(\nabla d^\pso_E(x))$).
\end{remark}

\subsection{The discrete scheme}\label{sec:scheme}

We now consider here the discrete scheme introduced in~\cite{LS95,ATW93} and its
generalization in~\cite{CaCha,BCCN06,CMP,CMNP18}. It is based on the following
process: given $h>0$, and $E$ a (bounded) finite perimeter set,
we define $T_hE$ as a minimizer of 
\[
  \min_F P_\p(F) + \frac{1}{h}\int_F d_E^\pso(x)dx \eqno (ATW)
\]
where $d^\pso_E$ is defined in~\eqref{eq:defdE}. 
If $E\subset\subset\Omega$ satisfies $(MC)$ in $\Om$, it is clear that for $h>0$ small enough, one has $T_hE\subset E$.
Indeed, for $h$ small enough one has $\overline{T_h E}\subset\Om$, and it follows from $(MC)$ \new{(more precisely, in the form $(MC_\delta)$ for
  $\delta=0$)}
that
\begin{equation}\label{eq:ThMC} \new{
  P_\p(T_h E \cap E) + \frac{1}{h}\int_{T_h E \cap E}  d^\pso_E(x) dx
 \le
  P_\p(T_h E) + \frac{1}{h}\int_{T_h E}  d^\pso_E(x) dx -
  \frac{1}{h}\int_{T_h E\setminus E}  d^\pso_E(x) dx ,
}
\end{equation}
which implies that $|T_h E\setminus E|=0$.
We recall in addition that in this case, $T_hE$ is also $\p$-mean convex in $\Om$, see the proof of~\cite[Lemma 2.7]{DPL}.
If $E$ satisfies $(MC_\delta)$ in $\Omega$ for some $\delta>0$, we can improve the inclusion $T_hE\subset E$.

\begin{lemma}\label{lem:dist}
  Assume that $E\subset\subset\Omega$ satisfies $(MC_\delta)$ \new{in $\Omega$, for some $\delta>0$}. Then
  for $h>0$ small enough, it holds
  \new{$$T_hE + \{ \pso\le \delta h\}\subset E.$$}
  In particular, $d^\pso _{T_h E}\ge d^\pso_E + \delta h$  and $T_h E   \subset \{d^\pso_E\le -\delta h\}$.
\end{lemma}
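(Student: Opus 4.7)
The plan is to prove the Minkowski-type inclusion $T_h E + z \subset E$ for every $z \in \R^d$ with $\phi^\circ(z) < \delta h$, and then extend to $\phi^\circ(z) \le \delta h$ by approximating a given $z$ of norm $\delta h$ by $z_n := (1-1/n)z$ and using the convention that both $T_h E$ and $E$ are open (identified with their density-$1$ points). The ``in particular'' consequences $T_h E \subset \{d^{\phi^\circ}_E \le -\delta h\}$ and $d^{\phi^\circ}_{T_h E} \ge d^{\phi^\circ}_E + \delta h$ should then follow from the Minkowski inclusion by elementary manipulations of the definition~\eqref{eq:defdE}, with triangle-inequality arguments shifting near-minimizers of the appropriate infima by a $\phi^\circ$-vector of length $\delta h$.

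For the main inclusion, I would fix $z$ with $0 < \phi^\circ(z) < \delta h$ and set $E' := E - z$. By translation invariance, $E'$ satisfies $(MC_\delta)$ in the open set $\Omega' := \Omega - z$, and for $h$ small enough one has $T_h E \subset\subset \Omega'$. The natural competitor in $(ATW)$ is $F := T_h E \cap E'$. Since $T_h E \subset E$ is already established (so $d^{\phi^\circ}_E \le 0$ on $T_h E$), the minimality of $T_h E$ yields
$$
P_\phi(T_h E) - P_\phi(F) \;\le\; -\frac{1}{h} \int_{T_h E \setminus E'} d^{\phi^\circ}_E(x)\, dx.
$$
For $x \in T_h E \setminus E'$, the point $x+z$ lies in $E^c$, whence $-d^{\phi^\circ}_E(x) \le \phi^\circ(z)$, and the right-hand side is at most $(\phi^\circ(z)/h)\,|T_h E \setminus E'|$. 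Applying $(MC_\delta)$ to $E'$ with the competitor $T_h E$ gives $P_\phi(F) + \delta\,|T_h E \setminus E'| \le P_\phi(T_h E)$, so combining,
$$
\delta\,|T_h E \setminus E'| \;\le\; \frac{\phi^\circ(z)}{h}\,|T_h E \setminus E'|,
$$
and since $\phi^\circ(z) < \delta h$ this forces $|T_h E \setminus E'| = 0$, i.e., $T_h E \subset E - z$.

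The main obstacle will be the choice of competitor. A direct attempt comparing $T_h E$ with $T_h E \cap \{d^{\phi^\circ}_E \le -\delta h\}$ would require the inward-parallel level set $\{d^{\phi^\circ}_E \le -\delta h\}$ to satisfy $(MC_\delta)$, a property that is not at all obvious in the anisotropic setting. The translation trick side-steps this: $(MC_\delta)$ is tautologically preserved by translations, so applied to $E-z$ it supplies the needed perimeter inequality for free, and the quantitative gain $\delta$ from $(MC_\delta)$ balances precisely against the bound $\phi^\circ(z)/h < \delta$ on the distance cost, yielding the desired conclusion unless $T_h E$ is contained in the shifted copy of $E$.
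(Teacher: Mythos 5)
Your argument is correct and is essentially the dual of the paper's: where the paper translates the minimizer (comparing $F := T_hE+\tau$, a minimizer of the $\tau$-shifted ATW functional, against $F\cap E$ and invoking $(MC_\delta)$ on $E$), you keep $T_hE$ fixed as minimizer of the original functional, compare it against $T_hE\cap(E-z)$, and invoke $(MC_\delta)$ on the translate $E-z$; the balance between the $\delta$-gain from outward minimality and the bound $-d^{\phi^\circ}_E\le\phi^\circ(z)$ on the strip is identical. Your closing limit $z_n\uparrow z$ to pass from strict to non-strict inequality (using the open representatives) and your sketch of the distance-function consequences are both fine.
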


\begin{proof}
  Let $h>0$ small enough so that $T_h E\subset E$ and $E+\{\pso\le \delta h\}\subset\Omega$.
  Choose $\tau$ with $\pso(\tau)<\delta h$ and consider $F:=T_hE+\tau$. We show that also $F\subset E$.
  The set $F\subset\subset \Omega$ is  a minimizer of
  \[
    P_\p(F) + \frac{1}{h} \int_F d^\pso_E(x-\tau)dx.
  \]
  In particular, we have
  \begin{multline*}
    P_\p(F) + \frac{1}{h} \int_F d^\pso_E(x-\tau)dx
    \le
    P_\p(F\cap E) + \frac{1}{h} \int_{F\cap E} d^\pso_E(x-\tau)dx
    \\    \le P_\p(F)+
    \frac{1}{h} \int_F d^\pso_E(x-\tau)dx
    -\int_{F\setminus E} \frac{1}{h}d^\pso_E(x-\tau) +\delta\, dx.
  \end{multline*}
  By definition of the signed distance function, for $x\not\in E$,
  $d^\pso_E(x-\tau)\ge -\pso(x-(x-\tau))=-\pso(\tau)>-\delta h$ so that
  if $|F\setminus E|>0$ we have a contradiction. 
  We deduce that $T_h E + \{\pso\le \delta h\}\subset E$.

  In particular, if $x\in T_hE$ and $y\not \in E$ is such that $d^\pso_{E}(x) = -\pso(y-x)$,
  then $y'=y-\delta h (y-x)/\pso(y-x)\not\in T_hE$ hence $d^\pso_{T_h E}\ge -\pso(y'-x) = d^\pso_E(x)-\delta h$.
  If $x\in E\setminus  T_h E$, $d^\pso_E(x) = -\psi(y-x)$ for some $y\in\overline{\Omega\setminus E}$, and $d_{T_h E}^\pso(x) = \psi(x-y')$
  for some $y'\in T_h E$. Since $\psi(x-y')+\psi(y-x)\ge \psi(y-y') \ge \delta h$ we conclude. Eventually if $x\not\in E$,
  for $y\in T_h E$ with $d^\pso_{T_h E}(x)=\po(x-y)$ we have $y+\delta h(x-y)/\po(x-y)\in E$, so that
  $d^\pso_E(x)\le \po(x-y)-\delta h =d^\pso_{T_h E}(x)-\delta h$. This shows that $d^\pso_{T_h E} \ge d^\pso_E + \delta h$.
\end{proof}

\begin{corollary}\label{thm:separatesets}
  Under the assumptions of Lemma \ref{lem:dist}, for any $n\ge 1$, we have $T_h^{n+1} E +\{\pso \le \delta h\}\subset T_h^{n} E$ and  $d^\pso_{T_h^n E}\ge d^\pso_E \new{+}%
  \delta n h.$
\end{corollary}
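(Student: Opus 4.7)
I plan to prove the corollary by induction on $n$. The base case $n = 0$ (taking $T_h^0 E := E$) is exactly Lemma~\ref{lem:dist}, which yields both the inclusion $T_h E + \{\pso \le \delta h\} \subset E$ and the distance bound $d^\pso_{T_h E} \ge d^\pso_E + \delta h$. The distance bound at level $n$ will be a direct consequence of the chain of inclusions at levels $0, \ldots, n-1$ via the pointwise argument at the end of the proof of Lemma~\ref{lem:dist}, so the essential work is to establish the inclusion $T_h^{n+1} E + \{\pso \le \delta h\} \subset T_h^n E$.

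The strategy is to apply Lemma~\ref{lem:dist} with $T_h^n E$ in place of $E$; for this I first need that $T_h^n E$ itself satisfies the strict outward minimality $(MC_\delta)$ in an open set $\Omega_n$ strictly containing it. To establish $(MC_\delta)$ for $T_h^n E$, I use the ATW minimality of $T_h^n E$ (as the minimizer associated to $T_h^{n-1} E$): for any admissible $F \supset T_h^n E$, $F \subset\subset \Omega_n$, one has
\[
P_\p(T_h^n E) - P_\p(F) \;\le\; h^{-1} \int_{F \setminus T_h^n E} d^\pso_{T_h^{n-1} E}(x)\, dx.
\]
The inductive hypothesis at level $n - 1$ gives $T_h^n E \subset \{d^\pso_{T_h^{n-1} E} \le -\delta h\}$, so by choosing $\Omega_n$ as a suitable open subset of this sublevel set containing $T_h^n E$ strictly, the integrand is at most $-\delta$ on $F \setminus T_h^n E$, and the right-hand side is bounded by $-\delta |F \setminus T_h^n E|$, which is $(MC_\delta)$ for $T_h^n E$ in $\Omega_n$.

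With $(MC_\delta)$ in hand, the inductive step proceeds exactly as in the proof of Lemma~\ref{lem:dist}: fixing $\tau$ with $\pso(\tau) < \delta h$, the translate $F := T_h^{n+1} E + \tau$ minimizes the shifted ATW functional associated to $T_h^n E$; comparing $F$ with $F \cap T_h^n E$, using $(MC_\delta)$ for $T_h^n E$ and then the pointwise lower bound $d^\pso_{T_h^n E}(x - \tau) \ge -\pso(\tau) > -\delta h$ on $F \setminus T_h^n E$ (obtained by taking $y = x$ in the defining infimum, which is admissible since $x \notin T_h^n E$), one arrives at a strictly positive integrand bounded above by zero, forcing $|F \setminus T_h^n E| = 0$. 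I anticipate the main technical obstacle to be the careful choice of the open domain $\Omega_n$ so that $T_h^n E$ is strictly interior to it while the region $\{d^\pso_{T_h^{n-1} E} \le -\delta h\}$ still contains $\Omega_n$ up to closure; this is delicate because the inductive containment $T_h^n E \subset \{d^\pso_{T_h^{n-1} E} \le -\delta h\}$ need not be strict at the boundary.
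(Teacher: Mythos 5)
Your overall plan (induction on $n$, with the distance bound deduced from the chain of inclusions exactly as at the end of Lemma~\ref{lem:dist}) matches the paper's, but the mechanism you propose for the inductive step is genuinely different from, and substantially more involved than, what the paper does — and it has a structural gap that you have correctly half-diagnosed yourself.

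The paper's own inductive step is a one-liner: from $T_h^n E + \tau \subset T_h^{n-1} E$, apply the operator $T_h$ to both sides; translational invariance of the $(ATW)$ functional gives $T_h(T_h^n E + \tau) = T_h^{n+1}E + \tau$, while the comparison (inclusion-preserving) property of $T_h$ gives $T_h(T_h^n E + \tau)\subset T_h(T_h^{n-1}E)=T_h^n E$. No knowledge of $(MC_\delta)$ for $T_h^n E$ is needed at this stage; that propagation is established separately and with considerably more work in Lemma~\ref{lem:ATWdelta}. Your route instead tries to manufacture $(MC_\delta)$ for $T_h^n E$ on the fly, from the $(ATW)$ Euler--Lagrange inequality together with the bound $d^\pso_{T_h^{n-1}E}\le-\delta h$ on $T_h^n E$, and then reapply Lemma~\ref{lem:dist}. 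The concern you flag — choosing an open $\Omega_n$ with $T_h^n E\subset\subset\Omega_n\subset\{d^\pso_{T_h^{n-1}E}\le-\delta h\}$ — is not merely delicate; it is generically impossible. Take the model case in which $T_h^{n-1}E$ is a ball $B(0,R)$ and $T_h^n E = B(0,R-\delta h)$: then $\{d^\pso_{T_h^{n-1}E}\le-\delta h\}=\overline B(0,R-\delta h)$ equals $\overline{T_h^n E}$, leaving no room for any intermediate open $\Omega_n$. Lemma~\ref{lem:dist} gives only $T_h^n E\subset\{d^\pso_{T_h^{n-1}E}\le-\delta h\}$ with no interior buffer, and this is sharp.

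There is a second, independent failure even if such an $\Omega_n$ existed. To run the Lemma~\ref{lem:dist} argument with $T_h^n E$ in place of $E$, the competitor $F=T_h^{n+1}E+\tau$ (with $\pso(\tau)<\delta h$) must be compactly contained in the domain where $T_h^n E$ satisfies $(MC_\delta)$. But $F$ lives in $T_h^n E+\{\pso<\delta h\}$, which can fill out essentially all of $T_h^{n-1}E$ and in particular escapes any $\Omega_n\subset\{d^\pso_{T_h^{n-1}E}\le-\delta h\}$. This is precisely why Lemma~\ref{lem:ATWdelta} (and Remark~\ref{rem:hsmall}) take pains to establish $(MC_\delta)$ for $T_h E$ \emph{in the original $\Omega$}, not in a shrinking domain: it is what allows the same $h$ to remain admissible at every step of the iteration. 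The cheap argument you propose could at best yield $(MC_\delta)$ in a shrunken domain, which does not suffice to iterate. To fix the proof along your lines you would essentially have to reprove Lemma~\ref{lem:ATWdelta}; the much shorter route is the paper's appeal to translational invariance and monotonicity of the scheme.
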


\begin{proof}
  The first statement is obvious by induction: Assuming that for $\tau$ with $\pso(\tau)\le\delta h$ one has $T_h^{n}E+\tau\subset T_h^{n-1} E$
  which is true for $n=1$, applying $T_h$ again and using the translational invariance we get that $T_h^{n+1}E+\tau\subset T_h^n E$.
  The second statement is obviously deduced. 
  \new{Indeed we can reproduce the end of the previous proof
    to find that $d^\pso_{T_h^n E} \ge d^\pso_{T_h^{n-1} E} + \delta h$,
    the conclusion follows by induction.}
\end{proof}

\begin{remark}[Density estimates]\label{rema}
  There exists $\gamma>0$, depending only on $\phi$ and the dimension, and
  $r_0>0$, depending also on $\psi$, such that the following holds: for $x$ such that  $|B(x,r)\cap T_hE|>0$ for all $r>0$ one
  has $|B(x,r)\cap T_hE|\ge\gamma r^d$ if $r<r_0h$. For the complement, as $T_h E$ is $\p$-mean convex in
  $\Omega$, we have as before that for $x$ such that $|B(x,r)\setminus T_h E|>0$ for all $r>0$,
  one has $|B(x,r)\setminus T_h E|\ge\gamma r^d$ for all $r$ with $B(x,r)\subset \Omega$, \textit{cf}~\eqref{eq:lwrdensity}.
\end{remark}

\subsection{Preservation of the outward minimality}

In the sequel, we show some further properties of the discrete evolutions and their limit.
An interesting result  in~\cite{DPL}
is that the $(MC_\delta)$-condition is preserved during the evolution.
We prove that it is also the case in the anisotropic setting.

We first show the following result: 
\begin{lemma}\label{lem:isop}
\new{Let $\delta>0$ be such that there exists a set
  $E\subset\subset\Omega$ satisfying $(MC_\delta)$ in $\Om$. Then $\delta|F|\le P_\p(F)$ for any $F\subset\subset\Om$,
  that is, the empty set also satisfies $(MC_\delta)$ in $\Om$.}
\end{lemma}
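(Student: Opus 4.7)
The plan is to construct a vector field $z\in L^\infty(\Om;\R^d)$ with $\po(z)\le 1$ a.e.\ on $\Om$ and $\Div z\ge\delta$ distributionally on $\Om$; once such a $z$ is available, the divergence theorem immediately yields, for any $F\subset\subset\Om$,
\[
P_\p(F) \;\ge\; -\int_\Om z\cdot D\chi_F \;=\; \int_F \Div z\,dx \;\ge\; \delta|F|,
\]
which is the desired isoperimetric estimate.

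Outside $E$, such a field is furnished directly by the $(MC_\delta)$ condition. Indeed, restricted to test sets $F\supset E$, $(MC_\delta)$ can be rewritten as the statement that $E$ minimizes the functional $F\mapsto P_\p(F)-\delta|F|$ among $F\supset E$ with $F\subset\subset\Om$, and the associated Euler--Lagrange equation produces a field $z^{\mathrm{out}}$ on $\Om\setminus E$ satisfying $\po(z^{\mathrm{out}})\le 1$, $\Div z^{\mathrm{out}}=\delta$ in $\Om\setminus E$, and trace $z^{\mathrm{out}}\cdot\nu_E=\p(\nu_E)$ on $\partial E$. A direct manifestation of this, which I would note first, is obtained by applying $(MC_\delta)$ with test set $E\cup F$ together with the submodularity $P_\p(E\cup F)+P_\p(E\cap F)\le P_\p(E)+P_\p(F)$ to get
\[
\delta|F\setminus E| + P_\p(E\cap F) \;\le\; P_\p(F),
\]
which already controls the portion of $F$ lying outside $E$.

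The main task, and the main obstacle, is the construction of the field inside $E$. The natural candidate is $z^{\mathrm{in}}:=\nabla\p(\nabla u)$ with $u:=-d^\pso_E$: by Remark~\ref{rem:nsd} one has $\psi(\nabla u)=1$ a.e.\ in $E$, hence $\po(z^{\mathrm{in}})\le 1$, and the traces of $z^{\mathrm{in}}$ and $z^{\mathrm{out}}$ both equal $\p(\nu_E)$ on $\partial E$. The distributional inequality $\Div z^{\mathrm{in}}\ge\delta$ in $E$ is the delicate point: I would extract it from the shrinking of the discrete iterates given by Corollary~\ref{thm:separatesets}, since $T_h^{n+1}E+\{\pso\le\delta h\}\subset T_h^n E$ encodes precisely a uniform $\delta$-lower bound on the $\p$-mean curvature of the nested family of sublevel sets $\{u>nh\}$. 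Passing to the limit $h\to 0$ within the framework of $1$-superharmonic functions (Appendix~\ref{sec:onesh}) then turns the discrete shrinking estimate into the required distributional divergence bound, with no smoothness assumption on $\partial E$.

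Finally, gluing $z^{\mathrm{out}}$ and $z^{\mathrm{in}}$ across $\partial E$ produces the global field $z$: the matching of traces eliminates any singular divergence contribution on the interface, so $\Div z\ge\delta$ holds distributionally on all of $\Om$ and the divergence theorem concludes the argument.
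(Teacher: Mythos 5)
Your high-level strategy---reduce to $F\subset E$ via submodularity (this matches the first step of the paper's proof exactly), then calibrate with a vector field $z$ satisfying $\po(z)\le 1$ and $\Div z\ge\delta$---is the right one. But the key step, the construction of $z$ inside $E$, has a sign error and, more seriously, is not actually carried out.

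The sign issue: with $u=-d^\pso_E$, the field $z^{\mathrm{in}}=\nabla\p(\nabla u)=\nabla\p(-\nabla d^\pso_E)$ points \emph{inward}. For the ball $B(0,R)$ with $\p=\psi=|\cdot|$ one gets $z^{\mathrm{in}}=-x/|x|$ and $\Div z^{\mathrm{in}}=-(d-1)/|x|<0$, and the trace $z^{\mathrm{in}}\cdot\nu_E=-1\neq\p(\nu_E)$, so the calibration inequality and the gluing both fail. You want $\nabla\p(\nabla d^\pso_E)$, the outward Cahn--Hoffman field. (Also, $\po(\nabla\p(\xi))=1$ follows from Euler's identity for the one-homogeneous $\p$, not from $\psi(\nabla u)=1$ as you write.)

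The deeper gap: even with the sign corrected, the distributional bound $\Div\big(\nabla\p(\nabla d^\pso_E)\big)\ge\delta$ in $E$ is far from automatic when $E$ is only a finite-perimeter set satisfying $(MC_\delta)$ --- the distance function is merely Lipschitz, its Cahn--Hoffman field is only in $L^\infty$, and near a non-smooth $\partial E$ there is no reason the absolutely continuous part of the divergence has a uniform lower bound. You write that you ``would extract'' this from Corollary~\ref{thm:separatesets} by ``passing to the limit $h\to0$ within the framework of $1$-superharmonic functions,'' but that is precisely where the work is, and it is not done. The paper's proof avoids calibrating with $\nabla\p(\nabla d^\pso_E)$ altogether: it regularizes by solving $-s\Div z_s+w_s=d^\pso_E$ with $z_s\in\partial\p(\nabla w_s)$, so $z_s$ has $L^\infty$ divergence by construction; Lemma~\ref{lem:dist} then gives the quantitative shift $w_s\ge d^\pso_E+s\delta$ on $E_s$, which reads off $\Div z_s=(w_s-d^\pso_E)/s\ge\delta$ directly; finally $P_\p(F\cap E_s)\ge\delta|F\cap E_s|$ is passed to the limit $s\to0$ using the strict convergence $P_\p(F\cap E_s)\to P_\p(F)$. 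Your outline gestures at the same ingredients, but the resolvent $w_s$ --- which is what converts the shrinking estimate into a genuine divergence bound --- is the idea that is missing, and without it the argument does not close.

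A minor point: you do not need a global field on $\Om$ (with the attendant trace-matching across $\partial E$) --- once the problem is reduced to $F\subset E$, a calibration inside $E$ alone suffices, and that is what the paper uses.
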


\begin{proof}
  By  $(MC_\delta)$ we have $\delta |F|= \delta |F\cap E|+\delta |F\setminus E|
  \le \delta |F\cap E| + (P_\p(F)-P_\p(F\cap E))$, so that
  it is enough to show the result for $F\subset E$.
  For $s>0$, we let $E_s$ be the largest minimizer of
  \begin{equation}\label{eq:ATWs}
    P_\p(E_s)+\frac{1}{s}\int_{E_s} d^\pso_{E} \,dx,
  \end{equation}
  which is obtained as the level set $\{w_s\le 0\}$ of the (Lipschitz continuous) solution $w_s$ of the equation
  \begin{equation}\label{eq:Eulers}
    -s\Div z_s + w_s = d^\pso_{\new{E}},\quad  z_s\in \partial \p(\nabla w_s),
  \end{equation}
  see for instance~\cite{CaCha,AlChNo} for details.
\new{A standard translation argument shows that the function $w_s$ satisfies $\psi(\nabla w_s)\le \psi(\nabla d_E^\pso)= 1$ a.e.~in $\R^d$.}
  We also let $E'_s:=\{w_s<0\}$ be the smallest minimizer of~\eqref{eq:ATWs}. \new{By construction, the set $E_s$ is closed while $E'_s$ is open.}
  
\new{ By Lemma \ref{lem:dist} it follows
 that there exists $s_0>0$ such that $E_s\subset\subset E$ for all $s<s_0$. Moreover, 
 being $E$ an open set, we also have $|E_s\Delta E|\to 0$ as $s\to 0$. 
 Indeed, given $x,\rho$ with $B(x,\rho)\subset E$, by comparison we have that 
   $x\in E_s$ for all $s< c \rho^2$, where $c>0$ depends only on $d,\p$ and $\pso$.}
  

Since $P_\p(E_s)\le P_\p(E)$, by the lower semicontinuity of $P_\p$
we get that $\lim_{s\to 0} P_\p(E_s)=P_\p(E).$ 
 We also claim that 
  \begin{equation}\label{eqF}
  \lim_{s\to 0} P_\p(F\cap E_s)=P_\p(F).  
  \end{equation}
Indeed, it holds
  \[
     P_\p(F\cup E_s)+P_\p(F\cap E_s)  \le P_\p(E_s) + P_\p(F),
   \]
   and $|E\setminus (F\cup E_s)|\to 0$ as $s\to 0$, so that
   \[
     P_\p(E)+\limsup_{s\to 0} P_\p(F\cap E_s) 
     \le \limsup_{s\to 0} \left( P_\p(F\cup E_s) + P_\p(F\cap E_s) \right)
     \le P_\p(E)+P_\p(F),
   \]
   which shows the claim.
   
   Again by Lemma~\ref{lem:dist} we know that $d_E^\pso\le -s\delta$ on $\partial E_s=\{w_s\le 0\}$.
   If $x\in E_s$ and $y\in \partial E_s$, $w_s(x)\ge w_s(y)-\pso(y-x)=-\pso(y-x)$ (using \new{$\psi(\nabla w_s)\le 1$}).
   If $z\not\in E$ and $y\in[x,z]\cap \partial E_s$, by one-homogeneity of $\pso$ we get
   one has $\pso(z-x) = \pso(z-y)+\pso(y-x)$,
   so that $0\le w_s(x)+\pso(y-x) = w_s(x) + \pso(z-x) - \pso(z-y) \le w_s(x) + \pso(z-x) - s\delta$. Taking
   the infimum over $z$, we see that $s\delta\le w_s(x)-d^\pso_E(x)$. Hence $\Div z_s \ge \delta$ a.e.~in \new{$E_s$}, so that
 \begin{equation}\label{eqfine}
   P_\p(F\cap E_s)\ge \int_\Om \Div z_s \chi_{F\cap E_s} \ge \delta |F\cap E_s|.
   \end{equation}
   The thesis now follows recalling \eqref{eqF} and letting $s\to 0$ in \eqref{eqfine}. 
\end{proof}

\new{
\begin{remark}
Notice that the constant $\delta$ in Lemma  \ref{lem:isop} is necessarily bounded above by the anisotropic Cheeger constant of $\Omega$
 (see \cite{CaFaMe}) defined as
\[
h_\p(\Omega) := \inf_{F\subset\subset\Om, F\ne\emptyset} \ \frac{P_\p(F)}{|F|}\,.
\]
\end{remark}
}

 We can now deduce the following:
 
\begin{lemma}\label{lem:ATWdelta}
  Let $\delta>0$, $E\subset\subset\Omega$ satisfy $(MC_\delta)$ in $\Om$, $h>0$ small enough, 
  and let $T_hE\subset E$ be the solution of $(ATW)$. Then
  $T_hE$ also satisfies $(MC_\delta)$ in $\Om$.
\end{lemma}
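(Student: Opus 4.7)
The plan is to interpose the inner-parallel set
\[
E_0\;:=\;\{d_E^{\pso}\le-\delta h\},
\]
which by Lemma~\ref{lem:dist} satisfies $T_hE\subset E_0\subset E$, between $T_hE$ and an arbitrary competitor $F\subset\subset\Om$ with $T_hE\subset F$. I will prove the outward form $P_\p(F)\ge P_\p(T_hE)+\delta|F\setminus T_hE|$ of $(MC_\delta)$, which by submodularity of $P_\p$ is equivalent to the intersection form stated in the lemma. Since $T_hE\subset E_0$, one has the disjoint decomposition $F\setminus T_hE=(F\cap E_0\setminus T_hE)\sqcup(F\setminus E_0)$, and I will bound each piece in turn.

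For the inner piece, set $F':=F\cap E_0$, so $T_hE\subset F'\subset\subset\Om$. Testing the $(ATW)$ minimality of $T_hE$ against $F'$ gives
\[
P_\p(T_hE)+\frac{1}{h}\int_{T_hE}d_E^{\pso}\,dx\;\le\;P_\p(F')+\frac{1}{h}\int_{F'}d_E^{\pso}\,dx,
\]
and using $d_E^{\pso}\le-\delta h$ on $F'\setminus T_hE\subset E_0$ one obtains $P_\p(F')\ge P_\p(T_hE)+\delta|F'\setminus T_hE|$. For the outer piece, the key claim is that $E_0$ itself satisfies $(MC_\delta)$ in $\Om$, which applied to $F$ yields $P_\p(F)\ge P_\p(F')+\delta|F\setminus E_0|$. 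Adding the two inequalities gives
\[
P_\p(F)\ge P_\p(T_hE)+\delta\bigl(|F'\setminus T_hE|+|F\setminus E_0|\bigr)=P_\p(T_hE)+\delta|F\setminus T_hE|,
\]
as desired.

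The hard part is proving that $E_0$ inherits $(MC_\delta)$ from $E$, which I plan to do by a calibration argument. The $(MC_\delta)$ assumption on $E$ produces, via the fields $z_s$ from the proof of Lemma~\ref{lem:isop} (or, more invariantly, via the anisotropic $1$-superharmonic machinery of Appendix~\ref{sec:onesh}), a vector field $Z\in L^\infty(\Om;\R^d)$ with $\po(Z)\le 1$ a.e., $Z\in\partial\p(\nabla d_E^{\pso})$ wherever this gradient is nonzero, and $\Div Z\ge\delta$ distributionally on $E$; concretely, $Z$ arises as a weak-$\ast$ limit of $z_s$ as $s\to 0$. Since $\partial E_0$ is a level set of $d_E^{\pso}$, the gradient $\nabla d_E^{\pso}$ is parallel to $\nu_{E_0}$ on $\partial^\ast E_0$, so by the $0$-homogeneity of $\partial\p$ one has $Z\in\partial\p(\nu_{E_0})$ there, and consequently $\int_{E_0}\Div Z\,dx=\int_{\partial^\ast E_0}Z\cdot\nu_{E_0}\,d\H^{d-1}=P_\p(E_0)$. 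The standard divergence-theorem calibration then yields $(MC_\delta)$ for $E_0$ with test sets inside $E$, and the general case reduces to this by applying $(MC_\delta)$ of $E$ to $F$ and restricting to $F\cap E\supset E_0$. The main obstacle is therefore the rigorous construction of $Z$ with the divergence bound --- the analytic dual of the geometric condition $(MC_\delta)$ on $E$ --- after which all remaining steps are routine algebraic combinations of the minimality of $T_hE$ and the inherited $(MC_\delta)$ on $E_0$.
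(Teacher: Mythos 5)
Your overall architecture is clear and the algebraic bookkeeping is correct: testing the $(ATW)$ minimality of $T_hE$ against $F\cap E_0$ does yield $P_\p(F\cap E_0)\ge P_\p(T_hE)+\delta|F\cap E_0\setminus T_hE|$ (since $d_E^{\pso}\le-\delta h$ on $E_0\setminus T_hE$), and if the inner-parallel set $E_0=\{d_E^{\pso}\le-\delta h\}$ satisfied $(MC_\delta)$, summing the two estimates would indeed give the lemma. The submodularity reduction between the two forms of $(MC_\delta)$ is also fine. However, you have simply transferred the entire difficulty of the lemma onto the assertion that $E_0$ satisfies $(MC_\delta)$, and the sketch you give for that assertion has a genuine gap.

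The gap is in the construction of the calibrating field $Z$. You claim to obtain $Z$ as a weak-$*$ limit of the fields $z_s\in\partial\p(\nabla w_s)$ coming from the Euler--Lagrange equation $-s\,\Div z_s+w_s=d_E^{\pso}$, and to then conclude $Z\in\partial\p(\nabla d_E^{\pso})$ a.e.\ where the gradient is nonzero. This last step does not follow. The subdifferential constraint $z_s\in\partial\p(\nabla w_s)$ is equivalent to $\po(z_s)\le 1$ together with the nonlinear identity $z_s\cdot\nabla w_s=\p(\nabla w_s)$; both sides of this identity involve products of sequences that converge only weakly-$*$ (one has $w_s\to d_E^{\pso}$ uniformly and $\|\nabla w_s\|_\infty$ bounded, hence $\nabla w_s\rightharpoonup^*\nabla d_E^{\pso}$, but no strong convergence of the gradients in general, and certainly none in the crystalline case where $\{\psi\le 1\}$ has flat facets). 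The $1$-superharmonic machinery of Appendix~\ref{sec:onesh} gives only $\Div Z\ge\delta$ together with the Anzellotti pairing identity $[Z,D\chi_E^+]=|D\chi_E|$ on $\partial^* E$; it says nothing about the relationship between $Z$ and $\nabla d_E^{\pso}$ in the interior of $E$, nor on $\partial^* E_0$. Without $Z\in\partial\p(\nu_{E_0})$ on $\partial^* E_0$ the calibration identity $\int_{\partial^* E_0}Z\cdot\nu_{E_0}\,d\H^{d-1}=P_\p(E_0)$ is not available. (There are further, more routine issues: $E_0$ is a sub-level set of a Lipschitz function and its finite-perimeter structure at the specific threshold $-\delta h$ needs to be addressed, e.g.\ by coarea; but the fundamental obstruction is the one above.)

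The paper's proof avoids exactly this problem by working with the one-parameter family of $(ATW)$-minimizers $E_s$, $0<s<h$, rather than with distance level sets. The point is that each $E_s$ comes automatically with a field $z_s$ that is a genuine calibration for $E_s$ — the pairing condition $z_s\in\partial\p(\nu_{E_s})$ holds on $\partial E_s$ by the Euler--Lagrange equation, not by a passage to the limit — and the interior divergence bound $\Div z_s\ge\delta-\e$ holds in a quantified neighborhood of $E_s$. A covering argument over the parameter $s$ then sums these localized calibrations into the desired global inequality, without ever needing a single field that is simultaneously a Cahn--Hoffman field for $d_E^{\pso}$ on all of $E$ and has large divergence. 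If you want to repair your argument, the cleanest way is to prove $(MC_\delta)$ for $E_0$ by the same covering argument (noting $T_hE\subset E_s\subset E_0$ for $s$ close to $h$ actually fails — $E_s\to T_hE$ as $s\to h$ only from outside, so you would need to truncate the family at some $s^*<h$ with $E_{s^*}\supset E_0$); at that point you have essentially reproduced the paper's proof, and your interpolation through $E_0$ becomes an unnecessary extra step.
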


\begin{proof}
  We remark that the sets $E_s$, $E'_s$ defined in the proof of Lemma~\ref{lem:isop}
  satisfy $E_s\subset E'_{s'}$ for $s>s'$. This follows from the
  fact that the term $s\mapsto d^\pso_E(x)/s<0$ is increasing for $x\in E$.
  \new{As a consequence 
    $E_s\setminus E'_s=\partial E_s=\partial E'_s$ and is Lebesgue negligible, for all $s$ but a countable number.}
  Also,
  if $s_n\to s$, $s_n < s$, then $E_{s_n}\to E_s$, while if $s_n>s$, $\Om\setminus E'_{s_n}$ converges
  to $\Om\setminus E'_s$. Moreover, as the sets satisfy uniform density estimates (for $n$ large enough), these
  convergences are
  also in the Hausdorff sense. In particular, we deduce that $E\setminus E'_s=\bigcup_{0<s'\le s} (E_{s'}\setminus E'_{s'})$
  (we recall $E_{s'}\setminus E'_{s'}=\{w_{s'}=0\}$).

  Let $\e>0$.
  \new{From the proof of Lemma~\ref{lem:isop}, if $h$ small enough so that Lemma~\ref{lem:dist} is valid,
    we know that $\Div z_s \ge \delta$ a.e.~in $E_s$.
    In addition, since  $w_s$ in~\eqref{eq:Eulers} satisfies $\psi(\nabla w_s)\le \psi(\nabla d^\pso_E)=1$
    a.e., then
    $\Div z_s$ is $(C/s)$-Lipschitz for a constant $C$ depending only on $\psi$.
  We deduce that }
there exists $\eta>0$ (depending only on \new{$\e,\psi$}) such that for any $s\in \new{(0,h)}$, 
in $N_s = \{x:\dist(x,E_s)< s\eta\}$, one has $\Div z_s\ge \delta-\e$\new{.}

  Let $h>\bar s>\underline{s}>0$, \new{with $\bar s$ and $\underline{s}$ chosen so that
  $\partial E'_{\bar s}=\partial E_{\bar s}$ and $\partial E'_{\underline{s}}=\partial E_{\underline{s}}$} . The set
  $E_{\underline{s}}\setminus E'_{\bar s}$ is covered by the open sets
  \new{
    $\tilde{N}_s = \{ x: 0<\dist(x,E'_s) \le
    \dist(x,E_s)<\underline{s}\eta/2\} \subset N_s$,
    $\underline{s}/2<s<h$.  Indeed, for
    $x\in E_{\underline{s}}\setminus E'_{\bar s}$, either
    $x\in E_s\setminus\overline{E'}_s\subset \tilde{N}_s$ for some
    $s\in[\underline{s},\bar{s}]$, or $x$ is approached by points in
    $x_n\in E_{s_n}$, $s_n\downarrow s$, so that
    $\dist(x,E_{s_n})<\underline{s}\eta/2$ for $n$ large enough and
    $x\in \tilde{N}_{s_n}$.}

  Hence one can extract a finite covering indexed by $s_1> s_2 >\dots> s_{N-1}$. We observe that
  necessarily, $h>s_1>\bar s$ and we let $s_N:=\underline{s}$. \new{In addition, for $1\le i\le N-1$ one must have
    $\partial E'_{s_{i+1}}\subset \tilde{N}_{s_i}$. Indeed, $\partial E'_{s_{i+1}}\cap \tilde{N}_{s_j}=\emptyset$
    for $j\ge i+1$, while if $x\in\partial E'_{s_{i+1}}\cap \tilde{N}_{s_j}$ for some $j< i$, since $\partial E_{s_i}$
    is in between $\partial E_{s_j}$ and $\partial E'_{s_{i+1}}$ one also has $x\in\tilde {N}_{s_i}$. In fact,
  we deduce $E'_{s_{i+1}}\setminus\overline{E'}_{s_i}\subset \tilde{N}_{s_i}$}
  
  Let $F\subset\subset\Om$ and up to an infinitesimal translation, assume $\H^{d-1}(\partial^*F\cap \partial \new{E'_{s_i}})=0$ for $i=1,\dots, N$.
  One has for $i\in \{ 1,\dots,N\}$,
  \begin{multline*} P_\p(\new{E'_{s_{i+1}}}\cap F)-P_\p(\new{E'_{s_i}}\cap F)
    \new{\ = \int_{\partial^*(E'_{s_{i+1}}\cap F)\setminus \overline{E'}_{s_i}}\p(\nu_{E'_{s_i}\cap F})d\H^{d-1} -
    \int_{F\cap \partial E'_{s_i}} \p(\nu_{E'_{s_i}})d\H^{d-1}}
  \\  \ge \int_{\partial^* [F\cap \new{E'_{s_{i+1}}}\!\setminus \new{E'_{s_i}}]}
  z_{s_i}\cdot\nu_{[F\cap E'_{s_{i+1}}\!\setminus E'_{s_i}]}
    \,d\H^{d-1}
    = \int_{F\cap \new{E'_{s_{i+1}}}\!\setminus \new{E'_{s_i}}}\Div z_{s_i} dx
    \ge (\delta-\e) |F\cap \new{E'_{s_{i+1}}}\!\setminus \new{E'_{s_i}}|.
  \end{multline*}
  \new{In the first inequality, we have used that $z_{s_i}\in \partial\p(\nu_{E'_{s_i}})$ so
    that $z_{s_i}\cdot \nu_{E'_{s_i}} = \p(\nu_{E_{s_i}})$ a.e.~on $\partial E'_{s_i}$ (and
    $z_{s_i}\cdot\nu\le\p(\nu)$ for all $\nu$),
    while in the last inequality, we have used $\Div z_{s_i}\ge \delta-\e$ in $\tilde{N}_{s_i}$.}
  \new{Hence,} summing from $i=1$ to $N$, we find that \new{(recalling that $E'_{\underline{s}}=E_{\underline{s}}$ up
    to a negligible set)}
  \[ P_\p(\new{E'_{s_1}}\cap F)\le P_\p(E_{\underline s}\cap F) -
    (\delta-\e) |(E_{\underline s}\setminus \new{E'_{s_1}})\cap F|.\]
  Since $E_{\underline s}$ is outward minimizing, $P_\p(E_{\underline s}\cap F)\le P_\p(E\cap F) \le P_\p(F)-(\delta-\e) |F\setminus E|$, so that:
  \[
    P_\p(\new{E'_{s_1}}\cap F)\le P_\p(F)-(\delta-\e) (|F\setminus E|+|(E_{\underline s}\setminus \new{E'_{s_1}})\cap F|).
  \]
  Sending $\bar s<s_1$ to $h$ and $\underline s$ to $0$, we deduce that
  $P_\p(E_h\cap F)\le P_\p(F)-(\delta-\e)|F\setminus E_h|$ hence the thesis holds,
  since $\e$ is arbitrary.
\end{proof}

\new{
\begin{remark}\label{rem:hsmall}
  Let us observe that both in Lemma~\ref{lem:dist} and in Lemma~\ref{lem:ATWdelta},
  as well as in Corollary~\ref{thm:separatesets},
  the conclusion holds as soon $h$ is small enough to have $\overline{T_hE}\subset\Om$
  (since in this case~\eqref{eq:ThMC} holds and $T_h E\subset E$),
  and $E+\{\pso\le \delta h\}\subset\Om$.
  In particular,  in all these
  results if $E'\subset E$ is another set satisfying
  $(MC_\delta)$ and $h$ is small enough for $E$, then it is also small enough for $E'$.
\end{remark}
}

\section{The arrival time function}\label{sec:arrival}

Consider an open set $\Omega\subset \R^d$ and a set $E^0\subset\subset \Omega$ such that $(MC_\delta)$ holds for some $\delta>0$.
As usual~\cite{LS95,ATW93} we let $E_h(t) := T_h^{[t/h]}(E^0)$, here $[\cdot]$ denotes the integer part.
Being the sets $T_h^n (E^0)$ mean-convex, we can choose an open representative.
We can define the \textit{discrete arrival time function} as
\[
u_h(x):= \max \{ t\chi_{E_h(t)}(x) , t\ge 0\},
\]
which is a l.s.c.~function\footnote{We can say that $u_h$ is a function in $BV(\Om)$ with compact support
and such that its approximate lower limit~ $u_h^-$ is lower semicontinuous.}
which, thanks to the co-area formula, satisfies
\begin{equation}\label{eq:uhonsh}
  \int_\Om \p(-Du_h) \le \int_{\overline{\Om}} \p(-Dv)
\end{equation}
for any $v\in BV(\R^d)$ with $v\ge u_h$ and $v=0$ in $\R^d\setminus\Omega$. 
\new{In particular,  $u_h$ is ($\p$-)$1$-superharmonic in the sense of~Definition \ref{defA}.}
 One can easily see that $(u_h)_h$ is uniformly bounded in $BV(\Om)$ so that a subsequence $u_{h_k}$
 converges in $L^1(\Om)$ to some $u$, which again is ($\p$-)$1$-superharmonic.

 In addition, since $E^0$ satisfies $(MC_\delta)$,
  thanks to Corollary~\ref{thm:separatesets} we have that $u_h$ satisfies a global Lipschitz bound. More
 precisely, for $x,y\in\Omega$ there holds
 \[
 u_h(x)-u_h(y) \le h+ \frac{\po(y-x)}{\delta}.
 \]
 Indeed, one has $u_h(x)=t\Rightarrow u_h(x+\tau)\ge t-h$ for any $t\ge 0$ and $\tau$
 with $\po(\tau)\le \delta h$. The claim follows by induction. 
 
 As a consequence we obtain that $u_{h}$ converges uniformly, up to a subsequence,  to a limit function $u$, which is also 
Lipschitz continuous, and satisfies 
 \begin{equation}\label{ulip}
 u(x)-u(y) \le \frac{\po(y-x)}{\delta}
 \end{equation}
 for any $x,y\in\Om$.  Moreover, recalling Lemma \ref{lem:ATWdelta}, we have that the functions $u_h$ and $u$ are 
$(\phi,\delta)$-$1$-superharmonic, in the sense of Definition \ref{defA} below.

We now show that the function $u$ is unique, and is the
 arrival time function of the anisotropic curvature flow starting form $E^0$, in the sense of~\cite{CMNP19}.
\new{ In particular, there is no need to pass to a subsequence for the convergence of $u_h$ to $u$ in the argument above.}
 
\begin{theorem}\label{th:limit}
  \new{Under the previous assumption on $E^0$,
  the arrival time function $u_h$ converge, as $h\to 0$,  to a unique limit $u$ such that
  $t\mapsto \{ u\le t\}$ is a solution of~$\eqref{eq:MCF}$ starting from $E^0$.
  Moreover it holds $$\lim_{h\to 0} \int_\Omega\p(-Du_h) = \int_\Omega \p(-Du)\,.$$}
\end{theorem}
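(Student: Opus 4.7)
The plan has three steps: (i) extract a uniformly convergent subsequence of $(u_h)$; (ii) identify any such limit uniquely as the arrival time of the level-set flow starting from $E^0$, so that the whole family must converge; (iii) establish the strict BV-convergence of the anisotropic total variations by a direct comparison using the $1$-superharmonicity property~\eqref{eq:uhonsh}.

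For (i)--(ii), I would combine the uniform Lipschitz estimate~\eqref{ulip} with the uniform $L^\infty$ bound $u_h \le \mathrm{diam}_\po(E^0)/\delta + h$, obtained by applying~\eqref{ulip} between an arbitrary $x$ and a boundary point $y\in\partial E^0$ where $u_h(y)=0$. Arzelà-Ascoli then yields a subsequence $u_{h_k}$ converging uniformly on $\overline\Om$ to a Lipschitz function $u$ vanishing outside $E^0$. To identify such a limit I would invoke the convergence results for the anisotropic Almgren-Taylor-Wang scheme in~\cite{CMP, CMNP19}: any $L^1$-subsequential limit $t\mapsto E(t)$ of the discrete evolutions $E_h(t)=T_h^{[t/h]}(E^0)$ is an anisotropic flat flow from $E^0$, and under the strict mean-convexity assumption $(MC_\delta)$ such flow coincides with the unique level-set solution of~\eqref{eq:MCF} (no fattening occurs). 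Consequently $\{u\le t\}=E(t)$ is uniquely determined by $E^0$, which forces the whole family $u_h$ to converge to $u$.

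For (iii), the $L^1$-convergence $u_h\to u$ combined with the lower semicontinuity of the anisotropic total variation gives the easy half
\[
\int_\Om\p(-Du)\ \le\ \liminf_{h\to 0}\int_\Om\p(-Du_h).
\]
For the matching upper bound, I would fix a cutoff $\eta\in C_c^\infty(\Omega)$ with $\eta\equiv 1$ on a neighborhood of $\overline{E^0}$, set $\e_h:=\|u-u_h\|_{L^\infty(\Om)}\to 0$, and test~\eqref{eq:uhonsh} against the competitor $v_h := u + \e_h\eta$. This $v_h$ is Lipschitz with compact support in $\Om$, vanishes outside $\Om$, and dominates $u_h$ pointwise: on $\{u_h>0\}\subset\overline{E^0}$ one has $v_h=u+\e_h\ge u_h$, while elsewhere $u_h=0\le v_h$. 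Hence~\eqref{eq:uhonsh} applies, and by subadditivity of $\p$,
\[
\int_\Om\p(-Du_h)\ \le\ \int_{\overline\Om}\p(-Dv_h)\ \le\ \int_\Om\p(-Du) + \e_h\int_\Om\p(-D\eta),
\]
with $\int_\Om\p(-D\eta)$ finite and independent of $h$. Passing to the $\limsup$ closes the argument.

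The main obstacle I anticipate is step (ii): identifying subsequential limits of the variational scheme with a single well-defined level-set flow in the (possibly crystalline) anisotropic regime. This requires the full non-fattening and uniqueness machinery of~\cite{CMP, CMNP19}, which is available precisely under the strong mean-convexity assumption $(MC_\delta)$ maintained throughout the paper. By comparison, step (iii) is essentially an immediate consequence of~\eqref{eq:uhonsh} once uniform $L^\infty$-convergence is known; the only non-routine ingredient is the cutoff construction needed to place $v_h$ into the admissible class in which the $1$-superharmonicity inequality holds.
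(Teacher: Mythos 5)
Your step (iii) is essentially the paper's argument: you test the $1$-superharmonicity inequality~\eqref{eq:uhonsh} against a small upward perturbation of $u$ supported near $E^0$. The paper uses $v=u+\e\chi_{E^0}$, so the error term is $\e P_\p(E^0)$; you use $v_h=u+\e_h\eta$ with a smooth cutoff, giving $\e_h\int_\Om\p(-D\eta)$. The two are interchangeable (both require only $\overline{E^0}\subset\subset\Om$), and your verification that $v_h\ge u_h$ is correct since $\{u_h>0\}\subset E^0$.

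The gap is in step (ii). You assert that any subsequential flat flow from $E^0$ ``coincides with the unique level-set solution of~\eqref{eq:MCF} (no fattening occurs)'' under $(MC_\delta)$, citing~\cite{CMP,CMNP19}. But those references do not give non-fattening for a \emph{given} initial set, even a strictly outward-minimizing one; they give uniqueness of the level-set function and, via squeezing, uniqueness of the flat flow only at \emph{generic} (i.e.~a.e.) levels of a monotone family of initial data. Non-fattening at the specific level $E^0$ is a \emph{corollary} of the theorem you are proving (see the remark following it: continuity of the volume $\Rightarrow$ no fattening), so invoking it here is circular. The second example in Section~\ref{sec:ex} shows the subtlety is real: there $|\partial\{u>0\}|>0$, so ``no fattening at $t=0$'' fails in the most naive sense, and one must argue that nonetheless the limit starting from the \emph{open} $E^0$ is well-defined. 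The paper's proof addresses exactly this point. It sets $E^s:=\{u>s\}$ for a subsequential limit $u$, notes $\bigcup_{s>0}E^s=E^0$, and invokes~\cite{CMP,CMNP19} only at a.e.~$s>0$ to get a unique limit $u^s$ for the scheme started from $E^s$; then the strict mean-convexity enters not as a black-box non-fattening assertion but quantitatively through Corollary~\ref{thm:separatesets} and Remark~\ref{rema}, which produce a time shift $\tau_s$ with $T_h^{[\tau_s/h]}E^0\subset E^s$, hence $u_h-\tau_s-h\le u_h^s\le u_h$. Letting $h\to0$ along two subsequences with limits $v,u$ and then $s\to0$ gives $v\le u$ for any such pair, which forces uniqueness. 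To repair your argument you would either need to supply this squeeze (which is the paper's actual contribution here) or prove independently that a set satisfying $(MC_\delta)$ generates a non-fattening level-set flow, which is not available in the literature you cite.

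Your step (i) (uniform Lipschitz bound~\eqref{ulip} plus an $L^\infty$ bound, then Arzelà--Ascoli) is fine and matches what the paper does implicitly; the $L^\infty$ bound $u_h\le h+\mathrm{diam}_{\po}(E^0)/\delta$ you sketch is correct.
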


 \begin{proof}
   For $s>0$ we let $E^s:=\{u>s\}$. Notice that, since $E^0$ is open, as in the proof of Lemma \ref{lem:isop} 
   we have $\bigcup_{s>0}E^s = E^0$.
   
   As a consequence of the existence and uniqueness result in~\cite{CMP,CMNP19}, 
   for a.e.~$s>0$ the arrival time functions $u^s_h\le u_h$ of the discrete flows $T^{[t/h]}_h E^s$ 
   converge uniformly to a unique limit $u^s$. In particular, considering
   the subsequence $u_{h_k}$, one has $u^s\le u$. On the other hand, thanks to Corollary~\ref{thm:separatesets}
   and the Remark \ref{rema}, given $s>0$ there is $\tau_s>0$ such that $T^{[\tau_s/h]}_h E^0\subset E^s$.
   Then, $T^{[\tau_s/h]+n}_h E^0\subset T^n_h E^s$ by induction so that $u_h-\tau_s-h \le u^s_h$. If $v$ is the
   limit of a converging subsequence of $(u_h)$, we deduce $v-\tau_s \le u^s\le u$.
   Sending $s\to 0$ we deduce $v\le u$.
   Since this is true for any pair $(u,v)$ of limits of converging subsequences of $(u_h)$, this limit
   is unique and $u_h\to u$.

 The last statement is already proved in~\cite{DPL} in a simple way: One just needs to show
  that $$\limsup_h \int_\Omega\p(-Du_h) \le \int_\Omega \p(-Du)\,.$$ 
  Since $(u_h)_h$
  converges uniformly to $u$, given $\e>0$, one has $u_h\le u+\e$ for $h$ small enough. On the other hand,
  since all these functions vanish out of $E^0$, it follows $u_h\le u+\e\chi_{E^0}$. Hence,
  being $u_h$  $\p$-$1$-superharmonic,
  \[
    \int_\Omega \p(-Du_h) \le \int_\Omega \p(-D(u+\e\chi_{E^0}))=\int_\Omega \p(-Du) + \e P_\p(E^0)
  \]
  for $h$ small enough,  and the thesis follows. 
\end{proof}

Theorem  \ref{th:limit} shows that the scheme starting from a strict $\p$-mean convex set always
converges to a unique flow, with no loss of anisotropic perimeter.
In particular,
\new{in dimension $d\le 3$ and if $\p$ is smooth and elliptic},
following~\cite{LS95} one can show
that the limit satisfies a distributional formulation of the anisotropic curvature flow.
\new{
More precisely, we say that a couple of functions $(X,v)$, with
\[
X:\Om\times [0,+\infty)\to\{0,1\}\in L^\infty(0,+\infty;BV(\Omega)),\quad 
v:\Om\times [0,+\infty)\to\R\in L^1(0,+\infty;L^1(\Omega,|DX(t)|)),
\]
is a \emph{$BV$-solution} to \eqref{eq:MCF} with initial datum $E^0$
 if the following holds:
For all $T>0$, 
$\zeta \in C^\infty(\overline\Om\times [0,T];\R^d)$ with $\zeta|_{\partial\Om\times [0,T]=0}$, 
and $\xi \in C^\infty(\overline\Om\times [0,T])$ with $\xi|_{\partial\Om\times [0,T]=0}$ and $\xi(T)=0$, we have
\begin{eqnarray} \label{eq04}
  && \int_0^T\left[\int_{\Om} \left( \text{div}\zeta + \nabla\p\left(-\frac{DX(t)}{|DX(t)|}\right)\,\nabla\zeta\,\frac{DX(t)}{|DX(t)|}\right)\p(-DX(t))
     +  v \zeta\cdot DX(t) \right]dt= 0,
\\ \label{eq05}
&&  \int_0^T\int_{\Om} X\, \partial_t\xi\,dxdt + \int_{E^0} \xi(x,0)\,dx = -   \int_0^T\int_{\Om} v\, \xi\, \psi(-DX(t))dt.
\end{eqnarray}

Reasoning as in~\cite[Theorem 2.3]{LS95}  one can prove the following, for $\p$ $C^{2,\alpha}$ and elliptic:
\begin{theorem}\label{th:LS}
Let $d\le 3$, let $u$ be the limit function in Theorem \ref{th:limit}, and let $X(x,t) := \chi_{\{u>t\}}(x)$.
Then there exists $v\in L^1(0,+\infty;L^1(\Omega,|DX(t)|))$ such that the couple $(X,v)$ is a \emph{$BV$-solution} to \eqref{eq:MCF}.
\end{theorem}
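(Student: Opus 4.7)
The plan is to adapt the distributional argument of Luckhaus--Sturzenhecker~\cite{LS95} to the anisotropic setting. The key new ingredient is the strict convergence $\int_0^T P_\p(E_h(t))\,dt \to \int_0^T P_\p(E(t))\,dt$, which follows from Theorem~\ref{th:limit} via the coarea formula applied to $u_h$. Together with the ellipticity and $C^{2,\alpha}$-regularity of $\p$ (so that $\nabla\p$ is single-valued and continuous on $S^{d-1}$), this is exactly what is needed to apply Reshetnyak's continuity theorem to nonlinear integrands in the normals.

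I would first introduce a natural discrete velocity. For $t\in [(n-1)h,nh)$ set
\[
v_h(x,t) := -\frac{d^\pso_{E_h^{n-1}}(x)}{h\,\psi(\nu_{E_h^n}(x))} \quad\text{on } \partial^* E_h^n,
\]
extended measurably to the interior; the factor $\psi(\nu)^{-1}$ is the anisotropic correction making $v_h\,\psi(\nu_{E_h^n}) = -d^\pso_{E_h^{n-1}}/h$, which is the quantity that will appear on the right of \eqref{eq05}. Using $E_h^{n-1}$ itself as competitor in the ATW step gives the dissipation inequality
\[
\frac{1}{h}\int_{E_h^{n-1}\triangle E_h^n}|d^\pso_{E_h^{n-1}}|\, dx \;\le\; P_\p(E_h^{n-1}) - P_\p(E_h^n),
\]
which, after telescoping and a coarea decomposition on the left-hand side, yields the uniform bound $\int_0^T\!\int_\Omega v_h^2\,\psi(\nu)\, d|DX_h(t)|\, dt \le C\,P_\p(E^0)$. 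In dimension $d\le 3$, a Sobolev-type interpolation (as in \cite{LS95}) converts this into sufficient weak compactness of $v_h$ to extract a limit $v\in L^1(0,+\infty;L^1(\Omega,|DX(t)|))$.

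The discrete Euler--Lagrange identity comes from testing the minimality of $E_h^n$ against inner variations $x\mapsto x+s\zeta(x)$, $\zeta\in C^1_c(\Omega\times [0,T);\R^d)$, and summing in $n$: this gives the discrete analogue of \eqref{eq04}. To pass to the limit in the surface integrand I invoke Reshetnyak's continuity theorem, using the strict convergence of the measures $\p(-DX_h(t))\,dt$ from Theorem~\ref{th:limit} and the continuity of $\nu\mapsto\nabla\p(\nu)$. The velocity term is then identified by combining the weak convergence of $v_h$ with the strict $BV$-convergence of $X_h$, following \cite[Thm.~2.3]{LS95} essentially verbatim. For \eqref{eq05}, I would expand the telescoping sum $\sum_n \int\xi(\cdot,nh)(\chi_{E_h^n}-\chi_{E_h^{n-1}})$: discrete integration by parts in time produces $\int X\,\partial_t\xi+\int_{E^0}\xi(0)$ in the limit, while a cylindrical decomposition of each symmetric difference by level sets of $d^\pso_{E_h^{n-1}}$ identifies the other side with $-\int v\,\xi\,\psi(-DX)\,dt$.

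The main obstacle is the passage to the limit in the nonlinear surface term $\nabla\p(-\nu_h)\cdot\nabla\zeta\,\nu_h\,\p(-\nu_h)$, which demands convergence of the normals $\nu_h = DX_h(t)/|DX_h(t)|$ in a strong enough sense; strict perimeter convergence (Theorem~\ref{th:limit}) together with the ellipticity of $\p$ supply exactly this via Reshetnyak, whereas without strict convergence one would only have lower semicontinuity and the limit could not be identified as a solution. The dimensional assumption $d\le 3$ enters only through the Sobolev-compactness step for $v_h$, and the $C^{2,\alpha}$-smoothness of $\p$ ensures that $\nabla\p$ is a well-defined continuous function on $S^{d-1}$ to which Reshetnyak applies.
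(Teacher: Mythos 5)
Your outline correctly identifies the two main new ingredients---the strict $L^1_t$-convergence of the anisotropic perimeters inherited from Theorem~\ref{th:limit}, and the use of Reshetnyak's continuity theorem (legitimate since $\nabla\p$ is single-valued and continuous on the sphere for $\p$ elliptic and $C^{2,\alpha}$) to pass to the limit in the nonlinear surface integrand and obtain \eqref{eq04}. The dissipation inequality you write down by testing the scheme with $E_h^{n-1}$ is also the correct starting point, as is the discrete first variation of $P_\p$ (\emph{cf.}~\cite[Ex.~20.7]{Maggi}).

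However, the proposal misattributes the source of the restriction $d\le 3$, and in doing so it skips the hardest part of the adaptation. You claim $d\le 3$ enters ``only through the Sobolev-compactness step for $v_h$''; this is not the case. The $L^2$-type bound obtained by telescoping the dissipation estimate and applying the coarea formula already gives weak compactness of the discrete velocities in \emph{every} dimension. What the dimensional bound (together with ellipticity and $C^{2,\alpha}$-regularity of $\p$) is needed for is the analogue of \cite[Proposition~2.2]{LS95}: the identification of the weak limit of the rescaled signed-distance measures $h^{-1} d^\pso_{E_h^{n-1}}\, \H^{d-1}$ on $\partial^* E_h^n$ with the actual surface velocity in \eqref{eq05}. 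That identification goes through a blow-up argument using the regularity theory for minimizers of elliptic parametric integrands and the Bernstein argument at the end of page 265 of \cite{LS95}. In the anisotropic setting these tools are supplied by Almgren--Schoen--Simon \cite{ASS} and Simon's Bernstein theorem for elliptic integrands \cite{Simon}, which are only available in low dimensions; this is precisely what forces $d\le 3$ and $\p\in C^{2,\alpha}$ elliptic. Your ``telescoping plus cylindrical decomposition'' sketch for \eqref{eq05} reproduces only the formal discrete time derivative and does not address this convergence step; without it, \eqref{eq05} is not established. You should make explicit that the identity \eqref{eq05} is the anisotropic version of \cite[Eq.~(0.5)]{LS95} with the signed distance replaced by $d^\pso$, and that its proof rests on the aforementioned anisotropic regularity theory rather than on a Sobolev embedding.
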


\begin{proof}
We only explain the adaptions to~\cite{LS95} required to prove this result.
Most of the proof remains unchanged, as it relies on estimates (such as basic density estimates)
which remain valid in the new setting.
However some difficulties arise in Section~2 of \cite{LS95} and in particular in the proof of Proposition~2.2,
which uses the regularity theory for minimal surfaces. Indeed, one first should assume that the dimension $d\le 3$,
$\p$ is elliptic and $C^{2,\alpha}$ for some $\alpha>0$,
in order to benefit from the regularity theory for anisotropic integrands (see\cite{ASS, Simon}) and be able to use the Bernstein argument at the end of page 265 of \cite{LS95}. This allows to show~\eqref{eq05},
which is a small variant of~\cite[Eq.~(0.5)]{LS95} (here $f=0$) whith the signed distance function replaced with the $\pso$-signed distance function.

In order to show~\eqref{eq04}, the Euler-Lagrange equation \cite[Eq. (0.7)]{LS95} has to be modified, with the curvature term on the left hand side replaced by the first variation of $P_\p$, which can be found in \cite[Ex.~20.7]{Maggi}.

\end{proof}
}

\begin{remark}[Continuity of volume and perimeter]
As is well-known for general flat flows (see~\cite{LS95,Caraballo}), the limit motion
$t\mapsto \{u> t\}$ is $1/2$-H\"older in $L^1(\Omega)$, in the sense that, for $s>t>0$,
\begin{equation}\label{contvol}
  |\{s>u\ge t\}\cap \Omega|\le C|t-s|^{1/2},
\end{equation}
where $C$ depends on the dimension and on the perimeter of the initial set. In particular,
$|\{u=t\}|=0$ for all $t>0$, so that up to a negligible set, $\{u>t\}=\{u\ge t\}$.
For $t=0$ it may happen that \new{$|\partial{\{u>0\}}|>0$}, as shown in the second example below.
A direct consequence of \eqref{contvol} is the absence of fattening for the evolution of an 
outward minimizing set.

In addition, since \new{the sets $\{u>t\}$ satisfy $(MC_\delta)$} for $t>0$, 
for $s>t \ge 0$ we have that
\[
  P_\p(\{u>s\}) + \delta |\{s\ge u >  t\}| = P_\p(\{u> t\}),
\]
so that $t\mapsto P_\p(\{u>t\})$ is strictly decreasing until extinction.
Since $\bigcup_{s>t} \{u>s\}=\{u>t\}$ we also get that $t\mapsto P_\p(\{u>t\})$ is
right-continuous.  
\new{Whether this function could jump or not remains an open question in this generality,
however the continuity has been proven in \cite{MS} in the classical isotropic case $\p(\cdot)=\psi(\cdot)=|\cdot|$.
}
\end{remark}


\section{Examples}\label{sec:ex}

\subsection{The case $\delta=0$}

If the initial datum $E^0$ satisfies only $(MC)$ we shall consider two cases: 
If $\p$ and $\psi$ are smooth and elliptic and $\partial E^0$ is smooth,
then there exists a smooth solution to \eqref{eq:MCF} on a time interval $[0,\tau)$, for some $\tau>0$ (see \cite[Chapter 8]{Lunardi}). 
Then, by the parabolic maximum principle, the solution $E(t)$ becomes strictly mean-convex for $t\in (0,\tau)$. In particular,
for any $\e\in (0,\tau)$ there exist $\delta_\e>0$ and an open set $\Om_\e$ such that 
$E(t_\e)\subset\subset\Om_\e$, $\delta_\e\to 0$ as $\e\to 0$,
 and $E(t)$ satisfies $(MC_{\delta_\e})$ in $\Om_\e$ for $t\in (\e,\tau)$.
As a consequence, the previous results hold in all the time intervals $[\e,+\infty)$, \new{so that
the limit} function $u$ is unique and continuous, and it is locally Lipschitz continuous in the interior of $E^0$.

On the other hand, for an arbitrary anisotropy $\p$, the function $u$ could be discontinuous on the boundary of $E^0$.
As an example in two dimensions, we take $\psi(\xi,\eta) = \p(\xi,\eta)=|\xi|+|\eta|$
and the cross-shaped initial datum 
\[
E^0 := \left([-1,1]\times [-2,2]\right)\cup\left([-2,2]\times [-1,1]\right)  \subset\R^2\,.
\]
It is easy to check that $E^0$ is outward minimizing, so that $E(t)\subset E^0$ is also outward minimizing
for all $t>0$. Moreover, the solution $E(t)=\{(x,y):\,u(x,y)\ge t\}$ 
is unique (see for instance \cite{Gi-Gi:01}) and can be explicitly described as follows (see Figure 1):
\begin{equation}\label{eq:Et}
E(t) =  \left\{\begin{array}{ll}
\left([-1,1]\times [-2+t,2-t]\right)\cup\left([-2+t,2-t]\times [-1,1]\right)  & \text{for } t\in [0,1],
\\[2mm]
\new{\big[-\sqrt{1-2(t-1)},\sqrt{1-2(t-1)}\,\big] \times
\big[-\sqrt{1-2(t-1)},\sqrt{1-2(t-1)}\,\big]} & \text{for } t\in \left[1,3/2\right],
\\[2mm]
\emptyset & \text{for }t> 3/2.
\end{array}\right.
\end{equation}
In particular, 
the function $u\in BV(\R^2)$  is 
\textit{discontinuous} on $\partial E^0\setminus \partial ([-2,2]\times [-2,2])$.

\begin{figure}[h]
\centering
\includegraphics[width=0.4\textwidth,trim=7.2cm 3cm 6.5cm 2cm,clip]{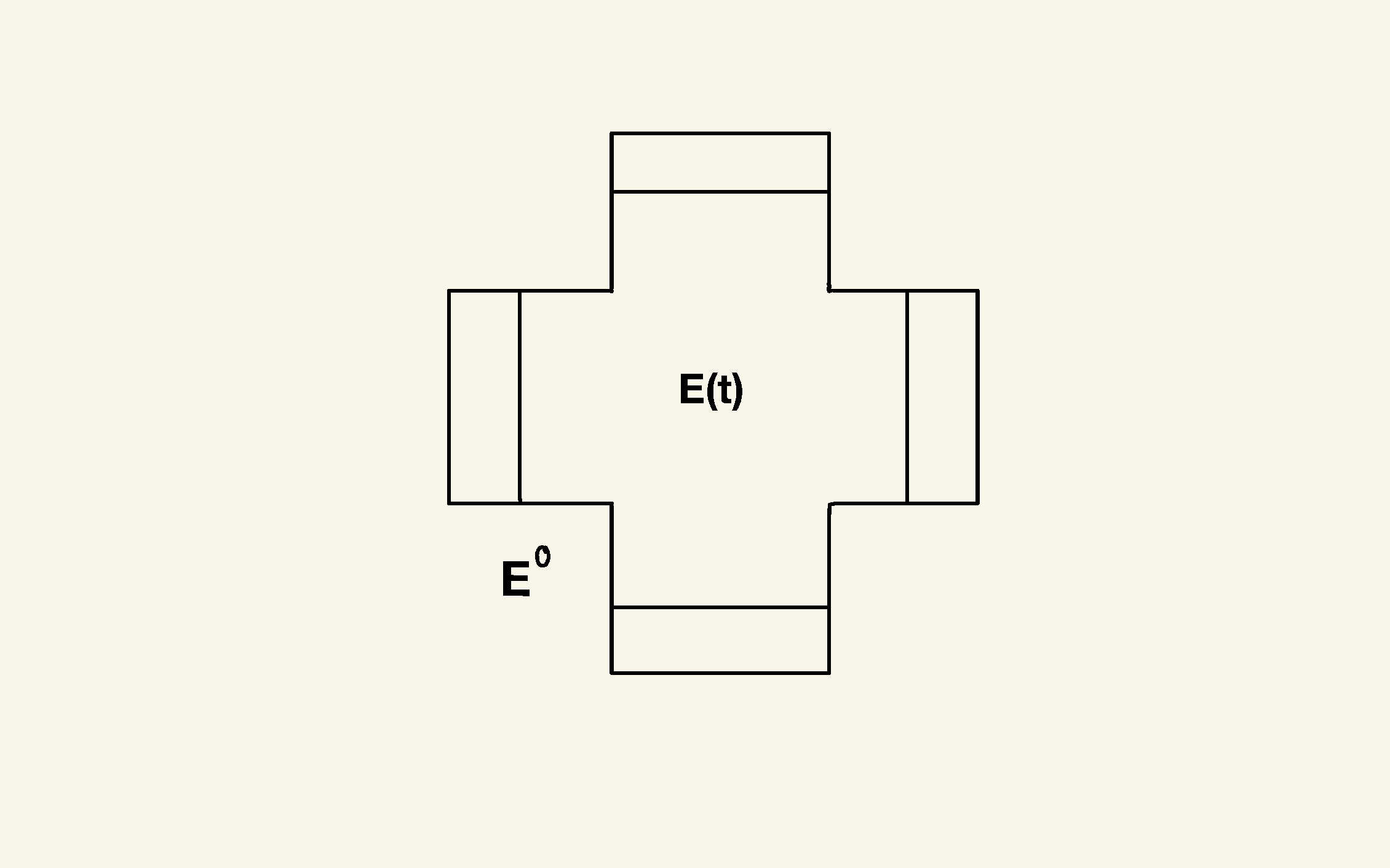}
\caption{The evolving set $E(t)$.}
\end{figure}

We observe that Formula~\eqref{eq:Et}
for $E(t)$ can be easily obtained by finding explicit solutions to~$(ATW)$, starting from
$E_L = ([-1,1]\times [-L,L])\cup([-L,L]\times [-1,1])$, $L>1$.
A ``calibration'' is given by the following vector field $z$, defined in $E_L$:
\[
  z(x,y) = \begin{cases} (x,y) & \textup{ if } |x|\le 1, |y|\le 1,\\
    (x, \pm 1) & \textup{ if } |x|\le 1, 1\le \pm y\le L,\\
    (\pm 1, y) & \textup{ if } 1\le \pm x\le L, |y|\le 1.
  \end{cases}
\]
One has $\Div z = 1+\chi_{[-1,1]^2}$ in $E_L$, $z(x,y) \in\{\pso\le 1\}$, and 
$P_\p(E_{\ell}) = \int_{\partial E_\ell} z\cdot\nu\,d\H^1$ for any $1\le\ell\le L$. Hence, if $L-h\ge 1$ and $F\subset E_L$, we have
\begin{multline*}
  P_\p(F)  + \int_{F} \frac{d^\pso_{E_L}}{h} dx \ge \int_{\partial F} \nu\cdot z d\H^1 + \int_{F} \frac{d^\pso_{E_L}}{h} dx
  \\ = \int_{\partial F} \nu\cdot z d\H^1 - \int_{\partial E_{L-h}} \nu\cdot zd\H^1 + P_\p(E_{L-h}) 
  + \int_{F} \frac{d^\pso_{E_L}}{h} dx
  \\ = \int z\cdot(D\chi_{E_{L-h}} - D\chi_F)  + P_\p(E_{L-h}) +
  \int_{E_{L-h}} \frac{d^\pso_{E_L}}{h} dx +   \int_{E_L} (\chi_F-\chi_{E_{L-h}}) \frac{d^\pso_{E_L}}{h} dx
\\  =   P_\p(E_{L-h}) +  \int_{E_{L-h}} \frac{d^\pso_{E_L}}{h} dx
  + \int_{E_L} (\chi_F-\chi_{E_{L-h}})\left(\frac{d^\pso_{E_L}}{h}  + 1 + \chi_{[-1,1]^2}\right)dx.
\end{multline*}
Now, the last integral is nonnegative, since $d^\pso_{E_L}/h+1\le 0$ in $E_{L-h}$, and is positive outside.
As a consequence, $E_{L-h}$ solves $(ATW)$ for $E=E_L$, and
one deduces the first line in~\eqref{eq:Et}. The proof of the second line in~\eqref{eq:Et}
is a standard computation (see for instance \cite{BCCN06}).

\subsection{Continuity of the volume up to $t=0$}

We provide, \new{in dimension $d=2$}, an example of an open set $E$ satisfying $(MC_\delta)$ for some $\delta>0$,
and such that \new{$|\partial{\{u>0\}}|>0$}.
\new{The set is built as a countable union of disjoint disks.}

Let $(x_n)_{n\ge 1}$  be a dense sequence of rational points in $\Omega:=B(0,1)\subset\R^2$.
We shall construct inductively a sequence $(r_n)_{n\ge 1}$ of 
positive numbers with \new{$\sum_n r_n<+\infty$} such that the following property holds:
Letting $E_0=\emptyset$ and $E_n = E_{n-1}\cup B(x_n,r_n)$ for $n\ge 1$, the sets  
$E_n$ all satisfy $(MC_{\delta})$ in $\Omega$ for some $\delta>0$.

Notice first that there exists $\delta>0$ such that each ball $B(x,r)\subset\Omega$
satisfies $(MC_{2\delta})$ in $\Omega$.
Choose now $r_1>0$ in such a way that $E_1=B(x_1,r_1)\subset\Omega$,
then $E_1$ satisfies $(MC_{2\delta})$. Assume now by induction 
that $E_{n}$ satisfies $(MC_{(1+1/n)\delta})$. Then, if $d_n:=\dist(x_{n+1},E_{n})=0$ we 
let $r_{n+1}=0$, so that $E_{n+1}=E_n$. Otherwise, if $d_n>0$ we choose $r_{n+1}\in (0,2^{-n})$ 
in such a way that 
\begin{equation}\label{condrn}
r_{n+1} \le  \min\left(\frac 12 \left( \frac 1n-\frac{1}{n+1}\right) \frac{\delta d_n^2}{2\pi C},\frac{d_n}{6}\right),
\end{equation}
where the constant $C>0$ will be chosen later in \textit{Case 3.}
Let also $\mathcal{N}\subset\mathbb N$ be the (infinite) set of indices such that $r_n>0$.

\new{Assuming that $E_n$ satisfies $(MC_{\delta+\delta/n})$, which is true
  for $n=1$,}
Let us check that $E_{n+1}$ satisfies $(MC_{\delta(1+\delta/(n+1)})$. We consider a set $F$ of finite perimeter
such that $E_{n+1}\subset F\subset\Om$, and we distinguish three cases:

\noindent\textit{Case 1.} $|F\cap B(x_{\new{n+1}},d_n)|\ge d_n^2/C$. In this case we have
\begin{eqnarray*}
P(F)&\ge& P(E_n) + \left( 1+\frac 1n\right)\delta |F\setminus E_n| 
\\ 
&\ge&
P(E_{n+1}) - 2\pi r_{n+1} + \left( 1+\frac{1}{n+1}\right)\delta |F\setminus E_{n}|
+  \left( \frac 1n-\frac{1}{n+1}\right) \delta |F\cap B(x_{\new{n+1}},d_n)|
\\ &\ge&
P(E_{n+1}) + \left( 1+\frac{1}{n+1}\right)\delta |F\setminus E_{n+1}| 
+ \left( \frac 1n-\frac{1}{n+1}\right) \frac{\delta d_n^2}{C} - 2\pi r_{n+1}
\\ &\ge&
P(E_{n+1}) + \left( 1+\frac{1}{n+1}\right)\delta |F\setminus E_{n+1}| ,
\end{eqnarray*}
where in the last inequality we used \eqref{condrn}.

\noindent\textit{Case 2.} $|F\cap B(x_{\new{n+1}},d_n)|\le d_n^2/C$ and $\H^1(F\cap \partial B(x_{\new{n+1}},r))=0$
for some $r\in (r_{n+1},d_n)$. 
In this case, we write $F=F_1\cup F_2$, with 
$F_1=F\cap B(x_{\new{n+1}},r)\supset B(x_{\new{n+1}},r_{n+1})$ 
and $F_2=F\setminus B(x_{\new{n+1}},r)\supset E_n$, and we have
\begin{eqnarray*}
P(F_1)&\ge& P(B(x_{\new{n+1}},r_{n+1})) + 2\delta  |F_1\setminus B(x_{\new{n+1}},r_{n+1})|
\\
P(F_2)&\ge& P(E_n)+\left( 1+\frac 1n\right)\delta |F_2\setminus E_n|.
\end{eqnarray*}
Summing up the two inequalities above, we get
\begin{eqnarray*}
P(F)&=&P(F_1)+P(F_2)\ \ge\ P(E_{n+1}) 
+\left( 1+\frac 1n\right)\delta 
\left(|F_1\setminus B(x_{\new{n+1}},r_{n+1})| + |F_2\setminus E_n|\right)
\\
&=&P(E_{n+1}) +\left( 1+\frac 1n\right)\delta |F\setminus E_{n+1}|.
\end{eqnarray*}

\noindent\textit{Case 3.} $|F\cap B(x_{\new{n+1}},d_n)|\le d_n^2/C$ and $\H^1(F\cap \partial B(x_{\new{n+1}},r))>0$
for a.e.~$r\in (r_{n+1},d_n)$. In this case, by co-area formula we have
\[
  \new{
 \int_{\frac{d_n}{6}}^{\frac{d_n}{3}}\H^1(F\cap \partial B(x_{\new{n+1}},r))\,dr
 =
 \left|F\cap \left(B\left(x_{\new{n+1}},\frac{d_n}{3}\right)\setminus B\left(x_{\new{n+1}},\frac{d_n}{6}\right)\right)\right| \le \frac{d_n^2}{C}.
}
\]
It follows that there exists $\rho_1\in (d_n/6,d_n/3)$ such that 
\[
\H^1(F\cap \partial B(x_{\new{n+1}},\rho_1))\le \frac{6d_n}{C}.
\]
Similarly we have
\[
  \new{
    \int_{\frac{2d_n}{3}}^{d_n} \H^1(F\cap \partial B(x_{\new{n+1}},r))\,dr
    =   \left|F\cap \left(B(x_{\new{n+1}},d_n)\setminus B\left(x_{\new{n+1}},\frac{2d_n}{3}\right)\right)\right| 
\le \frac{d_n^2}{C},
}
\]
and there exists $\rho_2\in (2d_n/3,d_n)$ such that 
\[
\H^1(F\cap \partial B(x_{\new{n+1}},\rho_2))\le \frac{3d_n}{C}.
\]
Using that
$\H^1(F\cap \partial B(x_{\new{n+1}},r))>0$ for all $r\in (r_{n+1},d_n)$ we deduce that
\begin{itemize}
\item 
either for a.e.~$r\in (\rho_1,\rho_2)$, \new{it holds $\H^0(\partial^*F\cap \partial B(x_{\new{n+1}},r))\ge 2$,}
 and it follows
that $P(F,B(x_{\new{n+1}},\rho_2)\setminus B(x_{\new{n+1}},\rho_1))\ge 2(\rho_2-\rho_1)\ge 2d_n/3$,
\item 
or for a set of positive measure of radii $r\in (\rho_1,\rho_2)$ one has $\H^1(F\cap \partial B(x_{\new{n+1}},r))=2\pi r$.
\new{In this case, observe that for a.e.~$y\in \partial B(x_{n+1},\rho_1)\setminus F$, the ray from $x_{n+1}$ to $\partial B(x_{n+1},r)$
  through $y$ crosses $\partial^* F$ at least once outside of $\overline{B}(x_{n+1},\rho_1)$ so that the projection of $\partial^*F\cap B(x_{\new{n+1}},\rho_2)\setminus B(x_{\new{n+1}},\rho_1)$ onto $\partial B(x_{n+1},\rho_1)$ has
  measure at least $2\pi\rho_1-6d_n/C$. Hence,
}
\[
  P(F,B(x_{\new{n+1}},\rho_2)\setminus B(x_{\new{n+1}},\rho_1))\ge 2\pi \rho_1 - 6d_n/C\\
  \ge d_n(\pi/3-6/C)\ge 2d_n/3
\]
provided we have chosen $C\ge 18/(\pi-2)$.
\end{itemize}

Then, proceeding as in the previous case we let $F_1=F\cap B(x_{\new{n+1}},\rho_{1})$
and $F_2=F\setminus B(x_{\new{n+1}},\rho_{2})$, and we have
\begin{align*}
P(F)=&\ P(F_1)+P(F_2) -  \H^1(F\cap \partial B(x_{\new{n+1}},\rho_1)) 
       -\H^1(F\cap \partial B(x_{\new{n+1}},\rho_2))
  \\ & \hspace{6.2cm} + \ P(F,B(x_{\new{n+1}},\rho_2)\setminus B(x_{\new{n+1}},\rho_1))
\\
\ge &\ P(E_{n+1}) 
+\left( 1+\frac 1n\right)\delta 
\left(|F_1\setminus B(x_{\new{n+1}},r_{n+1})| + |F_2\setminus E_n|\right)-\frac{9d_n}{C}+\frac{2d_n}{3}
\\
\ge &\ P(E_{n+1}) +\left( 1+\frac 1n\right)\delta |F\setminus E_{n+1}| - 
\left( 1+\frac 1n\right)\delta\frac{d_n^2}{C} -\frac{9d_n}{C}+\frac{2d_n}{3}
\\
\ge &\ P(E_{n+1}) +\left( 1+\frac 1n\right)\delta |F\setminus E_{n+1}| 
-\frac{2\delta + 9}{C} d_n+\frac{2d_n}{3}
\\
    \ge &\ P(E_{n+1}) +\left( 1+\frac 1n\right)\delta |F\setminus E_{n+1}|,
\end{align*}
as long as we choose $C\ge 3(2\delta + 9)/2$.

We proved that $E_n$ satisfies $(MC_{\delta})$ for all \new{$n\in \mathcal N$, therefore also the limit set
$$E:=\bigcup_{n\in \mathcal N} E_n=\bigcup_{n\in \mathcal N} B(x_n,r_n)$$} satisfies
$(MC_\delta)$ in $\Om$. 
In this case, the solution $u$ in Theorem  \ref{th:limit} is explicit and it is given by \new{
\[  
u(x)=  \sum_{n\in \mathcal N} \frac{(r_n^2 - |x-x_n|^2)^+}{\new{2}}.
\]}
Notice that we have
\[
\partial \{u>0\} = \partial E = \overline{B(0,1)}\setminus E,
\]
so that $|\partial \{u>0\}|=\pi-|E|>0$.


\appendix
\section{$\mathbf{1}$-superharmonic functions}\label{sec:onesh}

The goal of this appendix is to recall
some results proved in \cite{SchevenSchmidt1} on $1$-superharmonic functions,
to give precise statements in the anisotropic case,
and to propose some simple proofs, when possible.

\begin{definition}\label{defA}
We say that $u$ is ($\p$-)$1$-superharmonic in $\Omega$ if
$\{u\neq 0\}\subset\subset\Omega$ and for any $v$ with
$v\ge u$, $\{v\neq 0\}\subset\subset\Omega$, one has
\[
  \int_\Om \p(-Du) \le \int_\Om \p(-Dv),
\]
or, equivalently, for any $v$ with compact support in $\Om$,
\[
  \int_\Om \p(-D(u\wedge v)) \le \int_\Om \p(-Dv). \eqno (SH)
\]
Given $\delta>0$, we say that  $u$ is ($(\p,\delta)$-)$1$-superharmonic in $\Omega$ if
$\{u\neq 0\}\subset\subset\Omega$ and one has:
\[
  \int_\Om \p(-D(u\wedge v)) \le \int_\Om \p(-Dv) - \delta\int_\Om (v-u)^+dx
\quad \forall \ v, \{v\neq 0\}\subset\subset\Om. \eqno (SH_\delta)
\]
Equivalently, $u$ is a minimizer of $$\int_\Om \p(-Du) - \delta\int_\Om u dx,$$ 
with respect to larger competitors with the same boundary condition.
\end{definition}
Obviously then, $u\ge 0$ (using $v=u^+$ in $(SH)$). 
Notice that $\chi_E$ is $1$-superharmonic if and only if the set $E$  is outward minimizing.

Observe that, in this case, the set $E^0=\{u>0\}$ has
finite perimeter and satisfies $(MC_\delta)$.
Indeed, for $E\subset F\subset\subset\Om$, letting $v=\e\chi_F$ for $\e>0$,
we have
\begin{multline*}
  \int_\Om \p(-D(u\wedge \e\chi_F))  = \int_0^\e P_\p(\{u>s\}\cap F) ds
  \\  \le \e P_\p(F)-\delta\int_\Om (\e\chi_F-u)^+dx
  = \e\left(P_\p(F)-\delta\int_\Om (\chi_F-u/\e)^+dx\right).
\end{multline*}
Hence:
\[
 \int_0^1 P_\p(\{u>t\e\}\cap F) dt
\le P_\p(F)-\delta\int_\Om (\chi_F-u/\e)^+dx .
\]
Sending $\e\to 0$, we deduce $(MC_\delta)$.

In particular, it follows from Lemma~\ref{lem:isop} that for any $v\in BV(\Om)$ compactly supported,
$\delta \int_\Om |v| dx\le \int_\Om\p(-Dv)$.
We then deduce that if $u$ satisfies $(SH_\delta)$ also $u\wedge T$ for any $T>0$.
Indeed,
\[
  \int_\Om \p(-D((u\wedge T) \wedge v)) \le \int_\Om \p(-D(v\wedge T)) - \delta\int_\Om ((v\wedge T)-u)^+dx
\]
On the other hand,
\[
  \int_\Om \p(-D(v\wedge T))=\int_\Om\p(-Dv) -\int_\Om\p(-D(v-T)^+)
  \le\int_\Om \p(-Dv)-\delta\int_\Om (v-T)^+\,dx,
\] 
and it follows
\[
  \int_\Om \p(-D((u\wedge T) \wedge v)) \le \int_\Om \p(-Dv) - \delta\int_\Om (v-(u\wedge T))^+dx.
\]

Then, the following characterization holds:
\begin{proposition} Let $u$ satisfy $(SH_\delta)$. Then
  there exists $z\in L^\infty(\Omega;\{\po\le 1\})$ with
  $\Div z\ge \delta$,
  $[z,Du^+]=|Du|$ in the sense of measures (equivalently,
  $\int_\Om u^+ \Div z \,dx = \int \p(-Du)$),
  and $\Div z=\delta$ on $\{u=0\}$.
\end{proposition}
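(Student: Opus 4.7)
The plan is to realize $u$ as a constrained minimizer of a convex energy and to extract $z$ from the associated first-order optimality conditions.

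First, the equivalent form of $(SH_\delta)$ immediately says that $u$ minimizes
\[
F(v) := \int_\Om \p(-Dv) - \delta \int_\Om v\,dx
\]
over the convex class $\mathcal{C} := \{v\in BV(\Om) : v\ge u,\ \{v\neq 0\}\subset\subset\Om\}$: indeed, feeding $v\in\mathcal{C}$ into $(SH_\delta)$ gives $u\wedge v = u$ and $(v-u)^+ = v-u\ge 0$, so $F(u)\le F(v)$.

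Second, I would construct $z$ by convex duality, conveniently implemented via regularization: approximate $\p$ by a smooth, uniformly strictly convex $\p_\eta$ with $\p_\eta\to\p$, let $u_\eta$ be the minimizer of $F_\eta$ over $\mathcal{C}$, and set $z_\eta := \nabla\p_\eta(-\nabla u_\eta)$. The KKT conditions for this obstacle problem read $-\Div z_\eta - \delta = \lambda_\eta$, where $\lambda_\eta\le 0$ is the Lagrange multiplier for the constraint $v\ge u$, supported on the coincidence set $\{u_\eta = u\}$. This yields $\Div z_\eta\ge\delta$ on $\Om$, with equality on $\{u_\eta > u\}$, together with the automatic bound $\po(z_\eta)\le 1+O(\eta)$. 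Extracting a weak-$*$ limit $z_\eta\rightharpoonup z$ in $L^\infty(\Om;\R^d)$ (and $u_\eta\to u$ in $L^1$) produces $z\in L^\infty(\Om;\{\po\le 1\})$ with $\Div z\ge\delta$ as a distribution, and by a careful control of the non-coincidence region one obtains $\Div z = \delta$ on $\{u=0\}$ in the appropriate measure-theoretic sense (the measure $\Div z - \delta\mathcal{L}^d$ is supported on $\overline{\{u>0\}}$).

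Third, the pairing identity $[z,Du^+] = |Du|$ as measures, equivalently $\int_\Om u^+ \Div z\,dx = \int_\Om \p(-Du)$, combines two ingredients: the Euler relation for the one-homogeneous $\p$, namely that $z\in\partial\p(-\nabla u)$ is equivalent to $z\cdot(-\nabla u) = \p(-\nabla u)$ with $\po(z)\le 1$; and Anzellotti's integration-by-parts formula for $L^\infty$ divergence-measure vector fields paired against $BV$ functions of compact support. The pointwise identity $z_\eta\cdot(-\nabla u_\eta) = \p_\eta(-\nabla u_\eta)$ is passed to the limit by lower semicontinuity, with the matching reverse inequality coming from $\int u^+_\eta\Div z_\eta = \int\p_\eta(-\nabla u_\eta)$ and weak-$*$ convergence. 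Since $u\ge 0$ (by testing $(SH)$ with $v=u^+$, as noted after Definition~\ref{defA}), one has $u^+ = u$ and the formulation reduces to the stated one.

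The main technical obstacle is the pairing identity in the $BV$ setting: one must identify $z$ as a selection from $\partial\p(-Du)$ not only pointwise on the absolutely continuous part of $Du$, but also against its singular and jump parts, which requires the full Anzellotti machinery and a careful lower-semicontinuity passage of the subgradient inclusion from the $\eta$-level to the limit. The complementary-slackness equality $\Div z=\delta$ on $\{u=0\}$ requires in addition a delicate analysis of how the non-coincidence sets $\{u_\eta>u\}$ behave in the limit $\eta\to 0$, which is handled in \cite{SchevenSchmidt1} for the isotropic case and whose anisotropic version needs only the notational adaptation already carried out in the preceding sections.
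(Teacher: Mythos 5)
Your route is genuinely different from the paper's: you smooth the anisotropy and solve a regularized obstacle problem with the $BV$ obstacle $u$, then extract $z$ by KKT duality; the paper instead keeps $\p$ fixed and uses an $L^2$-penalized (ROF-type) approximation
\[
\min_{v=0 \text{ on }\partial\Om}\int_\Om\p(-Dv)+\frac{n}{2}\int_\Om(v-u\wedge n)^2-\delta\int_\Om v\,dx,
\]
whose unique minimizer $v_n$ is shown, by a comparison argument, to satisfy $v_n\le u\wedge n$ (and $v_n\ge 0$), which immediately yields $\Div z_n\ge\delta$ and $\Div z_n=\delta$ a.e.\ on $\{u=0\}$ from the Euler--Lagrange identity $\Div z_n+nv_n=n(u\wedge n)+\delta$. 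There are, however, two substantive gaps in your version.

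First, the smoothing of $\p$ works against you. If $\p_\eta$ is smooth and strictly convex (so that $z_\eta=\nabla\p_\eta(-\nabla u_\eta)$ is single-valued), then on any open set where $\nabla u_\eta=0$ the field $z_\eta$ is a constant vector, so $\Div z_\eta=0$ there, which contradicts the claimed $\Div z_\eta\ge\delta$. The inequality $\Div z\ge\delta$ on the flat region where $u$ vanishes is only possible precisely because $\partial\p(0)$ is the whole set $\{\po\le 1\}$, allowing a non-constant selection with positive divergence. Any smoothing that removes the singularity of $\p$ at the origin destroys this mechanism, while a smoothing that keeps $\p_\eta$ non-differentiable at $0$ leaves you with the same $1$-Laplacian-type degeneracy you were trying to avoid. (A secondary issue of the same flavor: the constraint $v\ge u$ with $u$ merely $BV$ is a genuinely irregular obstacle, and the existence and convergence of $u_\eta$ and its regularity do not follow from standard elliptic obstacle theory; the $L^2$-penalization in the paper sidesteps this entirely by turning the constraint into a data-fidelity term. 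There is also a sign slip in your KKT normal cone, though the intended conclusion is the right one.)

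Second, the real work of the proof is the passage to the limit in the pairing, and that step is not addressed. What must be shown is $\limsup_n\int_{\{u\ge s\}}\Div z_n\,dx\le\int_{\{u^+\ge s\}}\Div z$ for a.e.\ $s$, so that $\int_\Om\p(-Du)\le\int_\Om u^+\Div z$; here the choice of precise representative $u^+$ is crucial because $\Div z$ is only a measure, and the superlevel set $\{u\ge s\}$ may have reduced boundary carrying singular mass of $\Div z$. The paper handles this with an inner-approximation of $\partial^*\{u\ge s\}$ by a compact rectifiable set $K$, a Meyers--Serrin smoothing away from $K$, and Ziemer's absolute-continuity estimate for divergence-measure fields to control the error on $\{|D\chi_F|(\Om\setminus K)<\e\}$. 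Your proposal replaces this with the phrase ``passed to the limit by lower semicontinuity'' and a pointer to \cite{SchevenSchmidt1}; the paper itself cites Scheven--Schmidt only for the reverse inequality $\int_\Om u^+\Div z\le\int_\Om\p(-Du)$, and proves the delicate forward direction directly. As written, your argument omits the step that constitutes most of the proof.
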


\begin{corollary}
 Let $u$ satisfy $(SH_\delta)$. Then for any $s> 0$, $\{u^+\ge s\}$ and $\{u^+>s\}$ satisfy $(MC_\delta)$. 
\end{corollary}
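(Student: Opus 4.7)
The plan is to base the proof on the calibration field $z\in L^\infty(\Omega;\{\po\le 1\})$ with $\Div z\ge\delta$ and $\int_\Omega u^+\Div z\,dx=\int_\Omega\p(-Du)$ furnished by the preceding Proposition (recall in particular that $u\ge 0$, so $u^+=u$). First I would upgrade the global pairing identity to a pointwise statement on the level sets of $u$: using $u^+=\int_0^\infty\chi_{\{u>s\}}\,ds$ together with Fubini's theorem and the Gauss--Green formula for $L^\infty$ divergence-measure fields,
\[
\int_\Omega u^+\Div z\,dx=\int_0^\infty\int_{\partial^*\{u>s\}}z\cdot\nu_{\{u>s\}}\,d\H^{d-1}\,ds,
\]
while the coarea formula gives $\int_\Omega\p(-Du)=\int_0^\infty P_\p(\{u>s\})\,ds$. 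Since $\po(z)\le 1$ forces $z\cdot\nu\le\p(\nu)$ pointwise, equality of the two integrals combined with the non-negativity of the pointwise defect yields, for a.e.\ $s>0$,
\[
z\cdot\nu_{\{u>s\}}=\p(\nu_{\{u>s\}})\qquad \H^{d-1}\text{-a.e.\ on }\partial^*\{u>s\}.
\]
I will call such $s$ \emph{good}.

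Next, at a good level $s$, setting $E_s:=\{u>s\}$ and taking any $F\subset\subset\Omega$ of finite perimeter, I would decompose the reduced boundaries of $F$, $F\cap E_s$ and $F\setminus E_s$ in the standard way (into the pieces $\partial^*F\cap E_s^{(0)}$, $\partial^*F\cap E_s^{(1)}$, $\partial^*E_s\cap F^{(1)}$ and $\partial^*F\cap\partial^*E_s$). Using the pointwise inequality $z\cdot\nu_F\le\p(\nu_F)$ wherever $\nu_F$ appears, together with the tightness $z\cdot\nu_{E_s}=\p(\nu_{E_s})$ on $\partial^*E_s$, I would obtain
\[
P_\p(F)-P_\p(F\cap E_s)\ \ge\ \int_{\partial^*(F\setminus E_s)}z\cdot\nu_{F\setminus E_s}\,d\H^{d-1}
=\int_{F\setminus E_s}\Div z\,dx\ \ge\ \delta|F\setminus E_s|,
\]
which is precisely $(MC_\delta)$ for $E_s$ on good levels.

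Finally, to remove the restriction to good $s$, I would argue by monotone approximation. For $\{u>s\}$, pick good $s_n\downarrow s$, so that $\{u>s_n\}\uparrow\{u>s\}$ in $L^1(\Omega)$; lower semicontinuity of $P_\p$ together with the convergence $|F\setminus\{u>s_n\}|\to|F\setminus\{u>s\}|$ then promotes $(MC_\delta)$ to $\{u>s\}$. For $\{u\ge s\}$ choose instead good $s_n\uparrow s$ and exploit $\{u>s_n\}\downarrow\{u\ge s\}$ in the same fashion. The only mildly delicate point in the whole argument is the reduced-boundary bookkeeping in the second step on the common portion $\partial^*F\cap\partial^*E_s$: there the two normals are either equal or antipodal, and on the piece $\{\nu_F=-\nu_{E_s}\}$ one uses the free estimate $\p(\nu_F)\ge 0\ge -\p(\nu_{E_s})=z\cdot\nu_F$, which makes the calibration inequality automatic. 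Everything else reduces to routine applications of the Anzellotti pairing and the coarea formula.
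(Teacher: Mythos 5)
Your argument is correct and shares the paper's central idea: use the calibrating field $z$ from the Proposition, note that the identity $\int_\Om u^+\,\Div z = \int_\Om\p(-Du)$ together with the pointwise constraint $\po(z)\le 1$ and the coarea/layer-cake decomposition forces $z\cdot\nu_{\{u>s\}}=\p(\nu_{\{u>s\}})$ $\H^{d-1}$-a.e.\ on $\partial^*\{u>s\}$ for a.e.\ $s$, and then run the boundary-decomposition / calibration argument to get $(MC_\delta)$. Two remarks. First, you are somewhat cavalier on the measure-theoretic side: $z$ is only in $L^\infty$ with $\Div z$ a measure, so ``$z\cdot\nu$'' on a reduced boundary must be understood as an Anzellotti normal trace, and the sets entering the layer-cake step should be the precise representatives $\{u^+>s\}$ rather than $\{u>s\}$ (the paper tracks this distinction carefully, citing \cite{SchevenSchmidt1}); the ``mildly delicate point'' you flag — the portion $\{\nu_F=-\nu_{E_s}\}$ — is actually the easy one, while the genuine subtlety is the trace interpretation. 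Second, where you genuinely diverge from the paper is in passing from a.e.\ $s$ to all $s$: you propagate $(MC_\delta)$ directly by approximating a given level monotonically from good levels and invoking lower semicontinuity of $P_\p$ together with $|F\setminus E_{s_n}|\to|F\setminus E_s|$, whereas the paper first upgrades the intermediate identity $\Div z(\{u^+\ge s\})=P_\p(\{u\ge s\})$ to every $s$ via one-sided (monotone) continuity of $s\mapsto\Div z(\{u^+\ge s\})$ and left-semicontinuity of $s\mapsto P_\p(\{u^+\ge s\})$, and then leaves the calibration-implies-$(MC_\delta)$ step implicit (``which implies the thesis''). Both routes work; yours has the merit of avoiding the continuity discussion for $\Div z$ on level sets and of making explicit the step the paper compresses, at the price of being less precise on traces and representatives.
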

Here, $u^+$ is as usual the superior approximate limit of $u$ (defined $\H^{d-1}$-a.e.) and $[z,Du^+]$
the pairing in the sense of Anzellotti~\cite{Anzellotti}.

\begin{proof}
  For $n\ge 1$, let $v_n$ be the unique minimizer of
  \begin{equation}\label{eq:rofn}
    \min_{v=0\ \partial\Om} \int_\Om \p(-Dv) + \int_\Om\frac{n}{2}(v-u\wedge n)^2 -\delta v \,dx.
  \end{equation}
  (the boundary condition is to be intended in a relaxed sense,
  adding a term $\int_{\partial \Om} |\textup{Tr}v|\p(\nu_\Om)d\H^{d-1}$ in the energy
  if the trace of $v$ on the boundary does not vanish).
  The Euler-Lagrange equation for this problem asserts the existence of a field $z_n\in L^\infty(\Om;\{\po\le 1\})$
  with bounded divergence such that
  \[
    \Div z_n  + n v_n = n(u\wedge n) + \delta 
  \]
  a.e.~in $\Om$, and $\int_\Om \Div z_n v_n \,dx = \int_\Om \p(-Dv_n)$. On the other hand
  $\int_\Om\p(-Dv_n)\le\int_\Om\p(-D(u\wedge n))\le\int_\Om\p(-Du)$ and we have $v_n\to u$,
  $\int_\Om\p(-Dv_n)\to\int_\Om\p(-Du)$ as $n\to\infty$.

  We show that $v_n\le u\wedge n$. Indeed,
  $\int_\Om \p(-D (v_n\wedge u\wedge n))\le \int_\Om \p(-Dv_n)- \delta\int_\Om (v_n-(u\wedge n))^+ dx$,
  while
  $\int_\Om (v_n-(u\wedge n))^2 dx \ge \int_\Om ((v_n\wedge u\wedge n)-(u\wedge n))^2$.
  Hence,
  \begin{multline*}
    \int_\Om \p(-D (v_n\wedge u\wedge n)) +
    \frac{n}{2}\int_\Om ((v_n\wedge u\wedge n)-(u\wedge n))^2 - \delta\int_\Om (v_n\wedge u\wedge n)dx
\\    \le
    \int_\Om \p(-Dv_n) + \frac{n}{2}\int_\Om (v_n-(u\wedge n))^2 dx - \delta\int_\Om v_n dx
    \\+ \delta\int_\Om (v_n-(v_n\wedge u\wedge n))     -  (v_n-(u\wedge n))^+ dx
    \\ =
        \int_\Om \p(-Dv_n) + \frac{n}{2}\int_\Om (v_n-(u\wedge n))^2 dx - \delta\int_\Om v_n dx
  \end{multline*}
  and as the minimizer $v_n$ of~\eqref{eq:rofn} is unique, we deduce $v_n=v_n\wedge u\wedge n$.
  In particular, it follows $\Div z_n\ge \delta$. (Observe that since $v_n\ge 0$, one also
  has $\Div z_n\le \delta + n(u\wedge n)$, in particular $\Div z_n=\delta$ a.e.~in $\{u=0\}$.
  Also, $\int_{\{u>0\}} \Div z_n\le P_\p(E^0)$, hence $(\Div z_n)_{n\ge 1}$ are
  uniformly bounded Radon measures. Hence, up to a subsequence, we may assume that
  $z_n\stackrel{*}{\rightharpoonup} z$ weakly-$*$ in $L^\infty(\Om;\{\po\le 1\})$ while
  $\Div z_n\stackrel{*}{\rightharpoonup} \Div z$ weakly-$*$ in $\mathcal{M}^1(\Om;\R_+)$, that is,
  as positive measures.

We now write
\[
  \int_\Om \p(-Dv_n) = \int_\Om v_n\Div z_n \, dx
  \le \int_\Om (u \wedge n)\Div z_n \,dx = \int_0^n \int_{\{u\ge s\}}\Div z_n\,dx ds,
\] 
hence, since $v_n\to u$,
\[
  \int_\Om\p(-Du)\le \limsup_{n\to\infty}\int_0^n \int_{\{u\ge s\}}\Div z_n\,dx ds
  \le \int_0^\infty \left(\limsup_{n\to\infty}\int_{\{u\ge s\}}\Div z_n\,dx\right) ds
\] 
thanks to Fatou's lemma (and the fact $\int_{\{u\ge s\}}\Div z_n\,dx\le P_\p(E^0)$
are uniformly bounded).

We now study the limit of $\int_{\{u\ge s\}}\Div z_n\,dx$, for $s>0$ given,
assuming $\{u>s\}$ has finite perimeter (this is true for a.e.~$s$, and in fact one
could independently check that $s\mapsto P_\p(\{u\ge s\})$ is nonincreasing).

We consider a set $F=\{u\ge s\}$ with finite perimeter, and we recall
$D\chi_F$ is supported on the reduced boundary $\partial^* F$.
By inner regularity, given $\e>0$, we find a compact set $K\subset \partial^* F$
with $|D\chi_F|(\Om\setminus K)<\e$. We observe that $\H^{d-1}$-a.e.~on $K$ (which
is countably rectifiable), $\chi_F$ has an upper an lower trace, respectively $\chi_F^+=1$
and $\chi_F^-=0$. By the Meyers-Serrin Theorem (or its $BV$ version, \textit{cf}~\cite{Anz-Gia-78} or \cite[Theorem~3.9]{AFP}),
there exists $\varphi_k$ a sequence of functions in $C^\infty(\Om\setminus K;[0,1])$
with $\varphi_k\to\chi_F$ and
\[
  \int_0^1 \H^{d-1}(\{x\in\Om\setminus K: \varphi_k(x)= k\})
  = \int_{\Om\setminus K}|\nabla \varphi_k|dx\to
  |D\chi_F|(\Om\setminus K)<\e.
\]
Moreover, by construction the traces of $\varphi_k$
in $K$ coincide with the traces of $\chi_F$ (see~\cite[Section 3.8]{AFP}). 

We choose for each $k$ $s_k\in [1/4,3/4]$ such that
$\H^{d-1}(\partial \{\varphi_k\ge s_k\}\setminus K)\le 2\e$.
We then define the closed (compact) sets $F_k:=\{\varphi_k \ge s_k\}\cup K$.
One has $\int_\Om |D\chi_F-D\chi_{F_k}|=\int_{\Om\setminus K}|D\chi_F-D\chi_{F_k}|
\le 3\e$. (This shows that $F$ can be approximated strongly in $BV$ norm by
closed sets.)

Then, one has $\limsup_{n}\int_{F_k}\Div z_ndx\le \int_{F_k}\Div z$ as the
measures are nonnegative and $\chi_{F_k}$ is scs.
On the other hand, $|\int_\Om \Div z_n (\chi_F-\chi_{F_k}) dx|
\le 3\e$, so that
\[
  \limsup_{n\to\infty} \int_{F}\Div z_ndx
  \le 3\e + \int_{F}\Div z + \int (\chi_{F_k}-\chi_F)\Div z
  \le 3\e + \int_{F}\Div z + \int (\chi_{F_k}-\chi_F)^+\Div z.
\] 
Notice that it is important to specify
precisely the set $F$ that we consider in the last inequality:
We pick for $F$ the complement $F^+$ of its points of density zero,
equivalently $F^+ = \{u^+\ge s\}$. In that case, up to a set
of zero $\H^{d-1}$-measure, $\chi_G:=(\chi_{F_k}-\chi_{F^+})^+=\chi_{F_k\setminus F^+}$ vanishes on $K$
pointwise, moreover at $\H^{d-1}$-a.e.~$x\in K$, $G$ has Lebesgue density $0$.
Hence $G$ coincides $\H^{d-1}$-a.e.~with a Caccioppoli set
strictly inside $\Om$ and with $\int_\Om |D\chi_G|\le 3\e$.
Thanks to~\cite[Thm~5.12.4]{Ziemer} it follows $\Div z(G)\le C\e$
for $C$ depending only on $\p$ and the dimension (see also~\cite[Prop.~3.5]{SchevenSchmidt1}).
As a consequence, since $\e>0$ is arbitrary,
\[
  \limsup_{n\to\infty}\int_{\{u\ge s\}} \Div z_n dx \le \int_{\{u^+\ge s\}} \Div z.
\]
We obtain that
\[
  \int_\Om\p(-Du)\le \int_\Om u^+\Div z.
\]
The reverse inequality also holds thanks to~\cite[Prop.~3.5, (3.9)]{SchevenSchmidt1}, and
can be proved by localizing and smoothing with kernels depending on the local
orientation of the jump. We also deduce that, for a.e.~$s>0$,
$$
\int_{\{u^+\ge s\}}\Div z =P_\p(\{u\ge s\})\,. 
$$
Note that
$s\mapsto\Div z({\{u^+\ge s\}})$ is left-continuous, and $s\mapsto\Div z({\{u^+> s\}})$ is right-continuous,
whereas $s\mapsto P_\p(\{u^+\ge s\})$ is left-semicontinuous, which implies the thesis.
\end{proof}


\begin{thebibliography}{}

\end{thebibliography}


\begin{thebibliography}{10}

\bibitem{AlChNo}
L.~Almeida, A.~Chambolle, and M.~Novaga.
\newblock Mean curvature flow with obstacles.
\newblock {\em Ann. Inst. H. Poincar\'e Anal. Non Lin\'eaire}, 29(5):667--681,
  2012.

\bibitem{ASS}
Fred Almgren, Richard Schoen, and Leon Simon. 
\newblock Regularity and singularity estimates of hypersurfaces minimizing parametric elliptic variational integrals. 
\newblock {\em Acta Math.}, 139:217--265, 1977.

\bibitem{ATW93}
Fred Almgren, Jean Taylor, and Lihe Wang.
\newblock Curvature-driven flows: a variational approach.
\newblock {\em SIAM J. Control Optim.}, 31(2):387--438, 1993.

\bibitem{AFP}
Luigi Ambrosio, Nicola Fusco, and Diego Pallara.
\newblock {\em Functions of bounded variation and free discontinuity problems}.
\newblock Oxford Mathematical Monographs. Oxford University Press, New York, 2000.

\bibitem{Anz-Gia-78}
Gabriele~Anzellotti and Mariano~Giaquinta.
\newblock B{V} functions and traces.
\newblock {\em Rend. Sem. Mat. Univ. Padova}, 60:1--21, 1978.

\bibitem{Anzellotti}
Gabriele Anzellotti.
\newblock Pairings between measures and bounded functions and compensated
  compactness.
\newblock {\em Ann. Mat. Pura Appl. (4)}, 135:293--318, 1983.

\bibitem{BCCN06}
Giovanni Bellettini, Vicent Caselles, Antonin Chambolle, and Matteo Novaga.
\newblock Crystalline mean curvature flow of convex sets.
\newblock {\em Arch. Ration. Mech. Anal.}, 179(1):109--152, 2006.

\bibitem{CahnHoffmann}
J.~W. Cahn and D.~W. Hoffmann.
\newblock A vector thermodynamics for anisotropic surfaces-ii. curved and
  faceted surfaces.
\newblock {\em Acta Metallurgica}, 22:1205--1214, 1974.

\bibitem{Caraballo}
David~G. Caraballo.
\newblock Flat {$\phi$} curvature flow of convex sets.
\newblock {\em Taiwanese J. Math.}, 16(1):1--12, 2012.

\bibitem{CaFaMe}
Vicent Caselles, Gabriele Facciolo, and Enric Meinhardt. 
\newblock Anisotropic Cheeger sets and applications. 
\newblock {\em SIAM J. Imaging Sci.}, 2(4):1211--1254, 2009.

\bibitem{CaCha}
Vicent~Caselles and Antonin~Chambolle.
\newblock Anisotropic curvature-driven flow of convex sets.
\newblock {\em Nonlinear Anal.}, 65(8):1547--1577, 2006.

\bibitem{CMNP19}
Antonin Chambolle, Massimiliano Morini, Matteo Novaga, and Marcello Ponsiglione.
\newblock Existence and uniqueness for anisotropic and crystalline mean curvature flows.
\newblock {\em Jour. Amer. Math. Soc.}, 32(3):779--824, 2019.

\bibitem{CMNP18}
Antonin Chambolle, Massimiliano Morini, Matteo Novaga, and Marcello Ponsiglione.
\newblock Generalized crystalline evolutions as limits of flows with smooth anisotropies.
\newblock {\em Anal. PDE}, 12(3):789--813, 2019.

\bibitem{CMP}
Antonin Chambolle, Massimiliano Morini, and Marcello Ponsiglione.
\newblock Existence and uniqueness for a crystalline mean curvature flow.
\newblock {\em Comm. Pure Appl. Math.}, 70(6):1084--1114, 2017.

\bibitem{DPL}
Guido {De Philippis} and Tim {Laux}.
\newblock Implicit time discretization for the mean curvature flow of mean convex sets.
\newblock {\em arXiv:1806.02716}, 
\newblock to appear on Ann. Sc. Norm. Sup. Pisa Cl. Sci.

\bibitem{Figalli}
Alessio Figalli.
\newblock Regularity of codimension-1 minimizing currents under minimal assumptions on the integrand.
\newblock {\em J. Differential Geom.}, 106(3):371--391, 2017.

\bibitem{Gi-Gi:01}
M.-H. Giga and Y.~Giga.
\newblock Generalized motion by nonlocal curvature in the plane.
\newblock {\em Arch. Ration. Mech. Anal.}, 159(4):295--333, 2001.

\bibitem{LS95}
Stephan Luckhaus and Thomas Sturzenhecker.
\newblock Implicit time discretization for the mean curvature flow equation.
\newblock {\em Calc. Var. Partial Differential Equations}, 3(2):253--271, 1995.

\bibitem{Lunardi}
Alessandra Lunardi.
\newblock {\em Analytic semigroups and optimal regularity in parabolic problems}.
\newblock Modern Birkh\"{a}user Classics. Birkh\"{a}user/Springer Basel AG,
  Basel, 1995.

\bibitem{Maggi}
Francesco Maggi.
\newblock {\em Sets of finite perimeter and geometric variational problems. An introduction to geometric measure theory.}
\newblock Cambridge Studies in Advanced Mathematics, 135. Cambridge University Press, Cambridge, 2012.

\bibitem{MS}
Jan Metzger and Felix Schulze.
\newblock No mass drop for mean curvature flow of mean convex hypersurfaces.
\newblock {\em Duke Math. J.}, 142(2):283--312, 2008.

\bibitem{SchevenSchmidt1}
Christoph Scheven and Thomas Schmidt.
\newblock B{V} supersolutions to equations of 1-{L}aplace and minimal surface
  type.
\newblock {\em J. Differential Equations}, 261(3):1904--1932, 2016.

\bibitem{Simon}
Leon Simon.
\newblock On Some Extensions of Bernstein's Theorem.
\newblock {\em Math. Z.}, 154:265--273, 1977.

\bibitem{Ziemer}
William~P. Ziemer.
\newblock {\em Weakly differentiable functions}.
\newblock Graduate Texts in Mathematics, 120. Springer-Verlag, New York, 1989.

\end{thebibliography}

\end{document}